\documentclass[12pt]{article}

\usepackage{amsmath, amssymb, amsfonts, amsthm, array, enumerate, float, lineno, mathrsfs, subcaption, tikz,fullpage}


\usepackage{caption}
\usepackage{subcaption}

\usepackage{forest}

\usepackage{multirow}
\usepackage{hhline}


\usepackage{geometry}
\geometry{margin=1in}

\renewcommand{\S}{\mathcal{S}}

\newcommand{\qbar}{\overline{\mathcal{Q}}}
\newcommand{\QQ}{\overline{\mathcal{Q}}}
\newcommand{\q}{\overline{q}}

\DeclareMathOperator{\des}{des}
\DeclareMathOperator{\asc}{asc}
\DeclareMathOperator{\pl}{pl}
\DeclareMathOperator{\Set}{Set}

\newcommand{\set}[3]{#1 = \{#2 : #3\}}

\newcommand{\abs}[1]{\left|#1\right|}
\renewcommand{\S}{\mathcal{S}}
\newcommand{\Q}{\mathcal{Q}}
\renewcommand{\set}[1]{\left\{#1\right\}}
\newcommand{\T}{\mathcal{T}}

\newtheorem{proposition}{Proposition}
\newtheorem{lemma}{Lemma}
\newtheorem{theorem}{Theorem}
\newtheorem{corollary}{Corollary}
\theoremstyle{definition}

\newtheorem{example}{Example}


\title{Pattern restricted quasi-Stirling permutations}
\author{Kassie Archer, Adam Gregory, Bryan Pennington, and Stephanie Slayden}
\date{}


\begin{document}

\maketitle

\begin{abstract}
We define a variation of Stirling permutations, called quasi-Stirling permutations, to be permutations on the multiset $\{1,1,2,2,\ldots, n,n\}$ that avoid the patterns 1212 and 2121. Their study is motivated by a known relationship between Stirling permutations and increasing ordered rooted labeled trees. We construct a bijection between quasi-Stirling permutations and the set of ordered rooted labeled trees and investigate pattern avoidance for these permutations.
\end{abstract}

\section{Introduction}

Stirling permutations were introduced in 1978 by Gessel and Stanley \cite{GS1978} in the course of their investigation of Stirling polynomials. These polynomials have the 
form $f_k(n) = S(n+k, n)$ and $g_k(n) = c(n,n-k)$, where $S(n,m)$ and $c(n,m)$ denote the Stirling numbers of the second kind and signless Stirling numbers of the 
first kind, respectively. The set of Stirling permutations, typically denoted by $\Q_n$, consists of permutations on the multiset $\{1,1,2,2,\ldots, n,n\}$ that avoid the 
pattern 212. That is, if $\pi=\pi_1\pi_2\ldots \pi_{2n} \in \Q_n$ and $i<j<k$ with $\pi_i=\pi_k$, then we must have $\pi_j>\pi_i$. In \cite{GS1978}, Gessel and Stanley 
enumerated these permutations and found that $|\Q_n| = (2n-1)!!$. They subsequently refined this enumeration by the number of descents and found that the resulting numbers
$B_{n,i}$ satisfy Eulerian-like polynomials in $f_k(n)$ and $g_k(n)$. In particular, if $B_{n,i}$ denotes the number of Stirling permutations of length $2n$ with exactly 
$i$ descents, then 
\[
	\sum_{n=0}^\infty f_k(n) x^n = \left(\sum_{i=1}^k B_{k,i}x^i\right)/(1-x)^{2k+1}
\]
and 
\[
	\sum_{n=0}^\infty g_k(n) x^n = \left(\sum_{i=k+1}^{2k} B_{2k-i+1,i}x^i\right)/(1-x)^{2k+1}.
\]

Stirling permutations have received a considerable amount of attention since their conception. They have been found to be in bijection with several interesting 
combinatorial objects \cite{J2008,JKP2011,MY2015,  DY2014} (including ordered rooted labeled trees, which we discuss further in this paper); several permutation statistics on them have been studied, 
including descents, ascents, plateaus, and left-to-right maxima and minima \cite{GS1978, KP2011, B2008, JW2015}; and several applications of these permutations have been discovered. 
In addition, pattern avoidance on Stirling permutations was explored in \cite{KP2012, JW2015,CMM2016}.
\begin{figure}[h]
\centering
\forestset{
    my tree/.style={
      for tree={
        shape=circle,
        fill=black,
        minimum width=5pt,
        inner sep=1pt,
        anchor=center,
        line width=1pt,
        s sep+=25pt,
      },
    },
  }
  \begin{forest}
    my tree
    [, 
      [, label={left:$1$}
      [, label = left:$2$
      [, label = left:$5$]
      [, label = left:$7$]
      [, label = right:$9$]]
      [, label={right:$10$}]]
      [, label={right:$3$}
        [, label={below:$4$}]
        [, label={right:$6$}
          [, label={right:$8$}]
        ]]
    ]
  \end{forest}
  \caption{Consider the increasing ordered rooted tree pictured here. Traversing counterclockwise, we obtain the Stirling permutation $\pi = 125577992(10)(10)134468863$.}
   \label{fig:inctree}
\end{figure}
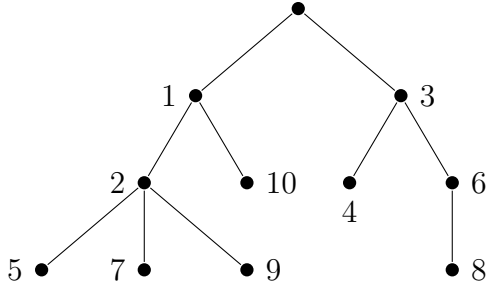

In this paper, the authors define quasi-Stirling permutations. This definition is motivated by a characterization of Stirling permutations 
via a bijection with increasing ordered rooted trees with labels in $[n]$ (see for example \cite{J2008}). Given an increasing ordered rooted tree on $[n]$ as seen in Figure \ref{fig:inctree}, one can obtain 
a Stirling permutation by performing a depth-first walk, i.e., by traversing the tree counterclockwise and recording the vertex below each edge traversed. Since each edge 
must be traversed twice, one obtains a permutation on the multiset $[n,n]$. Since this tree is increasing (and so no descendant of a given vertex is less than the vertex 
itself), this permutation will avoid the pattern $212$. This map is invertible and thus this defines a bijection. 

In this paper, we extend this bijection to all  ordered rooted labeled trees. In Section~\ref{sec: background}, we introduce the necessary background. In Section~\ref{sec: trees}, we describe the bijection between  rooted labeled ordered trees and quasi-Stirling permutations. In Section~\ref{sec: enumeration}, we present the bulk of our enumerative results. Namely, this section includes the enumeration of quasi-Stirling permutations that avoid all sets of size at least 2 containing patterns of length 3. We also enumerate these permutations with respect to plateaus. Finally, in Section~\ref{sec: open}, we present a few open questions and directions for future research.

\section{Background}\label{sec: background}

\subsection{Permutation statistics and pattern avoidance}
We denote the set of all permutations on the multiset $[n,n]=\{1,1,2,2,\ldots, n,n\}$ by $\S_{n,n}$. For a permutation $\pi \in \S_{n,n}$ and for $i<j$, we will denote by $\pi_{[i,j]}$ the segment $\pi_i\pi_{i+1} \ldots \pi_j$. For a given segment $\pi_{[i,j]}$, we denote by $\Set(\pi_{[i,j]})$ the subset of $[n]$ given by:
$$
\Set(\pi_{[i,j]}) = \{\pi_i, \pi_{i+1}, \ldots, \pi_j\}
$$
without multiplicities.
For example, $\Set(144288) = \{1,2,4,8\}$.
For $\pi \in \S_{n,n}$ and $i<j$, we say that the segment $\pi_{[i,j]}$ is \textbf{increasing} if $\pi_i\leq \pi_{i+1}\leq \cdots\leq \pi_j$ and similarly, we say that the segment is \textbf{decreasing} if $\pi_i\geq \pi_{i+1}\geq \cdots\geq \pi_j$. 

For a permutation $\pi=\pi_1\pi_2\ldots \pi_{2n} \in \S_{n,n}$, we say $i$ is 
a \textbf{descent} of $\pi$ if $\pi_i>\pi_{i+1}$, and we denote the number of descents of $\pi$ by $\des(\pi)$. Similarly, we say $i$ is an \textbf{ascent} of $\pi$ if 
$\pi_i<\pi_{i+1}$, and we denote the number of ascents of $\pi$ by $\asc(\pi)$. Finally, we say that $i$ is a \textbf{plateau} of $\pi$ if $\pi_i=\pi_{i+1}$ and denote the number 
of plateaus by $\pl(\pi)$. For example, if $\pi = 77611632554423\in S_{7,7}$, then $\des(\pi) = 6$, $\asc(\pi) = 3$, and $\pl(\pi) = 4$. Notice that for any $\pi \in \S_{n,n}$, 
we must have $\des(\pi) + \asc(\pi) + \pl(\pi) = 2n-1$. 

Pattern avoidance is a well-studied concept for traditional permutations (see \cite{SS1985} for the seminal paper on the topic and \cite{Kitaevbook} for a survey) as well as for generalizations of permutations and other combinatorial objects. For sequences $\sigma$ and $\tau$, we say $\sigma$ and $\tau$ are in the same relative order if (1) $\sigma_i=\sigma_j$ if and only if $\tau_i=\tau_j$, and (2) $\sigma_i<\sigma_j$ if and only if $\tau_i<\tau_j$. 
We say that a permutation $\pi=\pi_1\pi_2\ldots\pi_{2n}\in\S_{n,n}$ \textbf{contains} the pattern $\sigma=\sigma_1\sigma_2\ldots\sigma_k$ if there is some 
$i_1<i_2<\cdots<i_k$ so that $\pi_{i_1}\pi_{i_2}\ldots\pi_{i_k}$ is in the same relative order as $\sigma$. If $\pi$ does not contain $\sigma$, we say that $\pi$ 
\textbf{avoids} $\sigma$.  We denote by $\S_{n,n}(\sigma_1, \sigma_2, \ldots,\sigma_m)$ the set of permutations in $\S_{n,n}$ that avoid all of the patterns 
$\sigma_1, \sigma_2, \ldots,\sigma_m$. 

The set $\Q_n$ of \textbf{Stirling permutations} consists of the permutations in $\S_{n,n}$ that avoid the pattern 212. That 
is, $\Q_n:=S_{n,n}(212)$ and thus there is no $i<j<k$ so that $\pi_i=\pi_k$ with $\pi_j<\pi_i$. For example, $135532214664$ is a Stirling permutation since there is no subsequence in the same relative order as 212. However, the 
permutation $4225541331$ is not a Stirling permutation since it contains the subsequence $424$, which is an occurrence of the pattern 212. The permutation 
$4225541331$ does avoid the pattern $1212$ and also avoids $123$.

We define the set of \textbf{quasi-Stirling permutations} to be the set $\QQ_n := \S_{n,n}(1212, 2121)$, i.e. those permutations in $\S_{n,n}$ that avoid both 1212 and 
2121. For example, $4225541331$ is a quasi-Stirling permutation, but $4221554331$ is not since the subsequence 4141 is an occurrence of the pattern 2121. We will denote the 
set of quasi-Stirling permutations that additionally avoid the patterns in the set $\Lambda = \{\sigma_1, \sigma_2, \ldots,\sigma_m\}$ by $\QQ_n(\sigma_1, \sigma_2, \ldots,\sigma_m)$ or just $\QQ_n(\Lambda)$. We use the notation $\q_n(\sigma_1, \sigma_2, \ldots, \sigma_m)$ or $\q_n(\Lambda)$ to denote the size of the set $\qbar_n(\Lambda)$, i.e.
$$ \q_n(\Lambda)=  |\QQ_n(\Lambda)|.$$
Finally, we say that two patterns $\sigma$ and $\tau$ are \textbf{$\boldsymbol\QQ$-Wilf-equivalent} if $\q_n(\sigma) = \q_n(\tau)$ for all $n\geq 1$ and we say that two sets 
$\Lambda_1 = \{\sigma_1, \sigma_2, \ldots, \sigma_k\}$ and $\Lambda_2=\{\tau_1, \tau_2, \ldots, \tau_m\}$ are $\QQ$-Wilf-equivalent if $\q_n(\Lambda_1) 
= \q_n(\Lambda_2)$ for all $n\geq 1$. 

\subsection{Ordered rooted trees}

We define a \textbf{rooted labeled tree} on $[n]=\{1,2,\ldots, n\}$ to be a tree on $n+1$ nodes, $n$ of which are labeled with the numbers from $[n]$. The lone vertex that remains 
unlabeled is called the \textbf{root}. We draw these trees as in Figure~\ref{fig:rooted tree} with the root at the top.

\begin{figure}[h]
\centering
\forestset{
    my tree/.style={
      for tree={
        shape=circle,
        fill=black,
        minimum width=5pt,
        inner sep=1pt,
        anchor=center,
        line width=1pt,
        s sep+=25pt,
      },
    },
  }
  \begin{forest}
    my tree
    [, 
 [, label={left:$9$}
      [,label = left:$1$
        [, label={left:$8$}]]
        [, label={right:$12$}
          [, label={left:$6$}]
           [, label={right:$10$}]
        ]]
      [, label={right:$2$}
      [, label = left:$5$
      [, label = left:$3$]
      ]
      [, label = left:$4$]
      [, label = left:$11$
      [, label={left:$7$}]]]]
       \end{forest}
\forestset{
    my tree/.style={
      for tree={
        shape=circle,
        fill=black,
        minimum width=5pt,
        inner sep=1pt,
        anchor=center,
        line width=1pt,
        s sep+=25pt,
      },
    },
  }
  \begin{forest}
    my tree
    [, 
      [, label={left:$2$}
      [, label = left:$4$]
      [, label = left:$5$
      [, label = left:$3$]]
      [, label = left:$11$
      [, label={left:$7$}]]]
      [, label={right:$9$}
      [,label = left:$1$
        [, label={left:$8$}]]
        [, label={right:$12$}
          [, label={left:$10$}]
           [, label={right:$6$}]
        ]]
    ]
  \end{forest}
  \caption{Two examples of rooted labeled trees on $[12]$. These trees are different as ordered rooted labeled trees. }
  \label{fig:rooted tree}
  \end{figure}
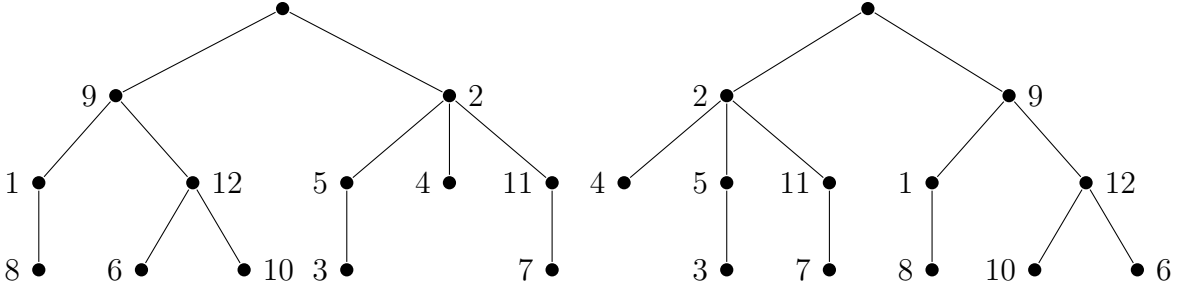

A vertex $j$ is a \textbf{descendant} of vertex $i$ if $i\neq j$ and $i$ appears in the unique path from the root to $j$. In this case, we say that $i$ is an \textbf{ancestor} of $j$. 
If $i$ and $j$ are adjacent in the forest, then we say that $i$ is the \textbf{parent} of $j$ and that $j$ is a \textbf{child} of $i$. For example, in Figure~\ref{fig:rooted tree}, vertices 
2 and 11 are ancestors of 7, while vertex 8 is a child of 1 and a descendant of 9. 
We say that a rooted labeled tree is \textbf{ordered} if the left-to-right order in which the children 
of each vertex appear matters. In other words, different embeddings of the tree into the plane determine distinct ordered rooted labeled trees. We can see that the two trees in 
Figure \ref{fig:rooted tree} are distinct as ordered rooted labeled trees since the children of several vertices appear in different orders in the given embedding. We let $\T_n$ 
denote the set of ordered rooted labeled trees on $[n]$.

\section{Quasi-Stirling permutations and rooted trees}\label{sec: trees}

The authors discovered quasi-Stirling permutations in the process of generalizing the map between increasing rooted labeled trees and Stirling permutations described in \cite{StanleyBook}. Such permutations arose naturally when investigating what happens if the domain of the map is extended to include all ordered rooted trees on $[n]$ (i.e. removing 
the condition that they must be increasing). We describe the resulting map here. 

Denote by $\varphi: \T_n \to \S_{n,n}$ the map which sends an ordered rooted labeled tree to a permutation of the multiset $[n,n]$ obtained by 
traversing the tree counterclockwise and recording each vertex twice. More precisely, we perform a depth-first walk, and every time an edge between parent $i$ and child $j$ is 
traversed, we record $j$. See Example \ref{example varphi} for a detailed example of this map. After the example, we prove that this map actually is a bijection between ordered rooted 
labeled trees and the set of quasi-Stirling permutations, $\QQ_n$, and describe its inverse (demonstrated by Example \ref{varphi inverse example}).

\begin{figure}[h]
\centering
\forestset{
    my tree/.style={
      for tree={
        shape=circle,
        fill=black,
        minimum width=5pt,
        inner sep=1pt,
        anchor=center,
        line width=1pt,
        s sep+=25pt,
      },
    },
  }

  \begin{forest}
    my tree
    [, 
 [, label={3}
      [,label = left:$2$] 
        [, label={left:$6$}]
        [, label={right:$4$}]]
      [, label={right:$5$} 
      [, label = left:$1$]]]
       \end{forest}
	\caption{The rooted labeled ordered tree $T$ associated to the permutation $\pi=\varphi(T) =  32266445115\in \S_{6,6}$. Since this permutation avoids 1212 and 2121, it is indeed a quasi-Stirling permutation.}
		\label{fig: tree example}
	\end{figure}
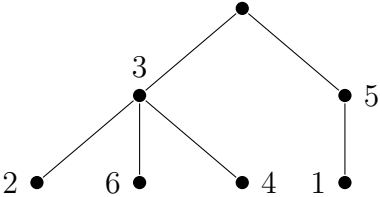

  	\begin{figure}[H]
	\centering
		\begin{tabular}{ccccc}
			\subcaptionbox{Steps 1--3}{
			\begin{tikzpicture}[node distance=1.25cm]
			\node[inner sep=0pt](r) {};
			\node[inner sep=0pt,label=120:3,below left of=r](3) {};
			\node[inner sep=0pt,label=120:2,below left of=3](2) {};
			\node[inner sep=0pt,label=0:4,below right of=3](4) {};
			\node[inner sep=0pt,label=30:5,below right of=r](5) {};
			\node[inner sep=0pt,label=270:6,below of=3](6) {};
			\node[inner sep=0pt,label=0:1,below of=5](1) {};
			\node(ar) {};
			\node[below left of=r](a3) {};
			\node[below left of=3](a2) {};
			\node[below right of=3](a4) {};
			\node[below right of=r](a5) {};
			\node[below of=3](a6) {};
			\node[below of=5](a1) {};
			\draw(2) -- (3) -- (6);
			\draw (4) -- (3) -- (r) -- (5) -- (1);
			\foreach \pt in {r,1,2,3,4,5,6}
				\fill (\pt) circle (2.5pt);
			\draw[->](ar) to [out=180,in=90,looseness=1] node[above] {\scriptsize{1}} (a3);
			\draw[->](a3) to [out=180,in=90,looseness=1] node[above] {\scriptsize{2}} (a2);
			\draw[->](a2) to [out=0,in=250,looseness=1] node[below] {\scriptsize{3}} (a3);
		\end{tikzpicture} } & \hspace{.5cm} &
			\subcaptionbox{Steps 4--6}{
			\begin{tikzpicture}[node distance=1.25cm]
			\node[inner sep=0pt](r) {};
			\node[inner sep=0pt,label=120:3,below left of=r](3) {};
			\node[inner sep=0pt,label=120:2,below left of=3](2) {};
			\node[inner sep=0pt,label=0:4,below right of=3](4) {};
			\node[inner sep=0pt,label=30:5,below right of=r](5) {};
			\node[inner sep=0pt,label=270:6,below of=3](6) {};
			\node[inner sep=0pt,label=0:1,below of=5](1) {};
			\node(ar) {};
			\node[below left of=r](a3) {};
			\node[below left of=3](a2) {};
			\node[below right of=3](a4) {};
			\node[below right of=r](a5) {};
			\node[below of=3](a6) {};
			\node[below of=5](a1) {};
			\draw(2) -- (3) -- (6);
			\draw (4) -- (3) -- (r) -- (5) -- (1);
			\foreach \pt in {r,1,2,3,4,5,6}
				\fill (\pt) circle (2.5pt);
			\draw[->](a3) to [out=250,in=120,looseness=1] node[left] {\scriptsize{4}} (a6);
			\draw[->](a6) to [out=60,in=300,looseness=1]
			node[below right,yshift=-2pt,xshift=-2pt] {\scriptsize{5}} (a3) to
			[out=300,in=170,looseness=1] (a4);
			\draw[->](a4) to [out=90,in=0,looseness=1]
			node[above right, yshift=-2pt,xshift=-2pt] {\scriptsize{6}} (a3) to
			[out=0,in=270,looseness=1] (ar);
		\end{tikzpicture} }
		& \hspace{.5cm} &
			\subcaptionbox{Steps 7--10}{
			\begin{tikzpicture}[node distance=1.25cm]
			\node[inner sep=0pt](r) {};
			\node[inner sep=0pt,label=120:3,below left of=r](3) {};
			\node[inner sep=0pt,label=120:2,below left of=3](2) {};
			\node[inner sep=0pt,label=0:4,below right of=3](4) {};
			\node[inner sep=0pt,label=30:5,below right of=r](5) {};
			\node[inner sep=0pt,label=270:6,below of=3](6) {};
			\node[inner sep=0pt,label=0:1,below of=5](1) {};
			\node(ar) {};
			\node[below left of=r](a3) {};
			\node[below left of=3](a2) {};
			\node[below right of=3](a4) {};
			\node[below right of=r](a5) {};
			\node[below of=3](a6) {};
			\node[below of=5](a1) {};
			\draw(2) -- (3) -- (6);
			\draw (4) -- (3) -- (r) -- (5) -- (1);
			\foreach \pt in {r,1,2,3,4,5,6}
				\fill (\pt) circle (2.5pt);
			\draw[->] (ar) to [out=270,in=180,looseness=1] node[below left] {\scriptsize{7}}(5);
			\draw[->](a5) to [out=240,in=120,looseness=1] node[above left] {\scriptsize{8}} (a1);
			\draw[->](a1) to [out=60,in=300,looseness=1] node[right] {\scriptsize{9}} (a5);
			\draw[->](a5) to [out=90,in=0,looseness=1] node[above right] {\scriptsize{10}} (ar);
		\end{tikzpicture}
 }
		
		\end{tabular}
		\caption{Step-by-step depiction of the tree-traversal in computing $\varphi(T)$ from Example \ref{example varphi}.}
		\label{fig: tree example step-by-step}
	\end{figure}
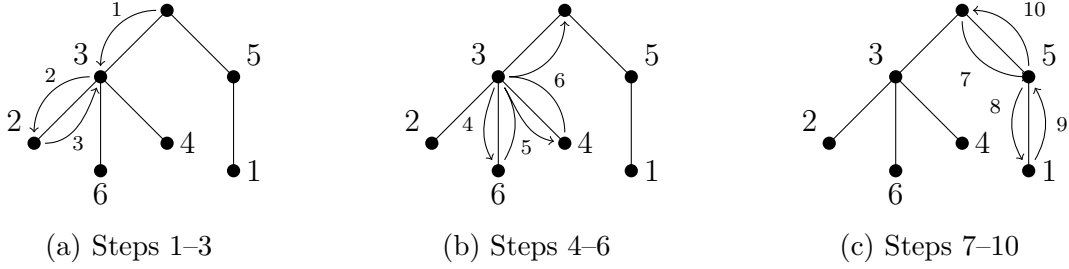

\begin{example}\label{example varphi} Consider the rooted labeled ordered tree $T$ in Figure~\ref{fig: tree example}. Let us construct the quasi-Stirling permutation $\pi=\varphi(T)$. A visual depiction of this step-by-step process is given in Figure~\ref{fig: tree example step-by-step}.
		
		\begin{enumerate}\itemsep=0em
			\item Begin by traversing to the left-most child node of the root.
			Our first value in the permutation is $\pi_1 = 3$.
			\item Since $3$ has children, we continue our traversal to its left-most child $2$.
			We record its value giving us $\pi_{[1,2]} = 32$.
			\item The node labeled 2 has no children, so we traverse back up the tree to $3$. We record the node associated to the edge we are traversing giving us a second $2$. We have thus far obtained $\pi_{[1,3]} = 322$.
			\item Node $3$ still has children that have not yet been visited, so we traverse the edge to the left-most unvisited child node $6$, and record its
			value. Thus $\pi_{[1,4]}  = 3226$.
			\item Traverse back up to $3$ and down to the last remaining child node of 3.
			Record the value 6 as we move up the tree to 3 and record the value 4 as we move down to the child node 4. We have $\pi_{[1,6]}  = 322664$.
			\item Now that all of the children of $3$ have been visited, we
			can record $3$ for the second time and move up to the root, leaving us with $\pi_{[1,8]}  = 32266443$.
			\item Next, we traverse to the next unvisited
			child node of the root. This is our first time visiting this node;
			we obtain $\pi_{[1,9]}  = 322664435$.
			\item We traverse to the only child of $5$, and record its value to get $\pi_{[1,10]}  = 3226644351$.
			\item We travel up towards the root to 5 and record the leaf 1 once again. This gives us $\pi_{[1,11]}  = 32266443511$.
			\item All children of $5$ have been visited. We move up to the root and record 5 to get $\pi_{[1,12]}  = 322664435115$. Since there are no more child nodes of the root to visit, our process is complete and $\pi = 322664435115.$
		\end{enumerate}
\end{example}

	The next theorem asserts that this map is indeed a bijection that sends rooted ordered trees to quasi-Stirling permutations. We do this by first showing that the image of $\varphi$ is contained in $\QQ_n$ and then by constructing an inverse to $\varphi$. 
	
\begin{theorem}
\label{thm:treebij}
The map $\varphi$ described above is a bijection between $\T_n$ and $\QQ_n$. 
\end{theorem}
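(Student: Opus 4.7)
The plan is to prove the theorem in three stages: first, show that the image $\varphi(\T_n)$ is contained in $\QQ_n$; second, construct an explicit inverse $\psi:\QQ_n\to\T_n$; third, verify that $\psi\circ\varphi$ and $\varphi\circ\psi$ are the identity maps. The central tool throughout is the observation that, for a non-root vertex $v$ of $T$, the two positions in $\varphi(T)$ recording the label of $v$ correspond to the first and second traversals of the edge joining $v$ to its parent; the positions strictly between them record exactly the edges of the subtree rooted at $v$, each appearing twice.

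For the first stage, this observation has an immediate corollary: the interval $I_v=[\ell_v,r_v]$ between the two occurrences of the label of $v$ is determined by the subtree at $v$. Because any two subtrees of $T$ are either nested or vertex-disjoint, the same holds for the intervals. An occurrence of the pattern $1212$ or $2121$ at positions $i_1<i_2<i_3<i_4$ with $\pi_{i_1}=\pi_{i_3}=a$ and $\pi_{i_2}=\pi_{i_4}=b$ would require $I_a$ and $I_b$ to cross (each containing exactly one endpoint of the other), contradicting the nested-or-disjoint property. Hence $\varphi(T)\in\QQ_n$.

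For the second stage, given $\pi\in\QQ_n$, define $I_i=[\ell_i,r_i]$ for each $i\in[n]$ where $\ell_i<r_i$ are the two positions of $i$ in $\pi$. The avoidance of $1212$ and $2121$ is precisely the statement that the family $\{I_1,\ldots,I_n\}$ is laminar, so it determines a forest in which $j$ is a child of $i$ whenever $I_j\subsetneq I_i$ and no $I_k$ lies strictly between them. Adjoining a root above the maximal intervals and ordering the children of every vertex by the left endpoints of their intervals yields an ordered rooted labeled tree $\psi(\pi)\in\T_n$.

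For the third stage, the equality $\psi(\varphi(T))=T$ follows because the intervals recovered from $\varphi(T)$ coincide with the subtree intervals of $T$, so ancestry and sibling order are preserved verbatim. For $\varphi(\psi(\pi))=\pi$ I would use induction on $n$: the maximal intervals of $\pi$ give a unique decomposition $\pi=a_1 w_1 a_1 a_2 w_2 a_2\cdots a_k w_k a_k$ (with each $w_j$ itself quasi-Stirling on its support), which matches the traversal of $\psi(\pi)$ starting from the root and processing each child subtree in turn. The main obstacle is this inverse bookkeeping: one must verify carefully that the laminar structure forces precisely the block decomposition written above and that the left-to-right sibling ordering in $\psi(\pi)$ is the unique ordering whose depth-first traversal reproduces $\pi$. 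Once that is done, the rest is essentially forced.
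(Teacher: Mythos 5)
Your proposal is correct and follows essentially the same route as the paper: both arguments rest on the fact that the two occurrences of each label delimit the traversal of that vertex's subtree, so the resulting intervals are nested or disjoint (ruling out $1212$ and $2121$), and both invert $\varphi$ by recursively decomposing $\pi$ into maximal blocks whose first and last entries agree. Your laminar-family phrasing and the explicit check that $\psi\circ\varphi$ and $\varphi\circ\psi$ are identities are a somewhat more careful packaging of what the paper states more informally, but the underlying decomposition is identical.
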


\begin{proof}
Given any ordered rooted labeled tree $T\in\T_n$, $\varphi(T)$ is clearly an element of $\S_{n,n}$. To show that $\varphi(T) \in \QQ_n$, we must show that this permutation avoids $1212$ and $2121$. If $j$ is a descendant of $i$, $i$ must 
be read first while traversing the tree and $j$ must be read twice before $i$ is read again. If $i$ and $j$ are not related (that is, neither one is a descendant of the other) and $i$ 
appears to the left of $j$ in the tree, then $i$ must be read twice before $j$ is ever read while performing this traversal of the tree. Thus the pattern $abab$ never appears, and in particular $1212$ and $2121$ never appear,
so $\pi$ avoids these patterns. 

It remains to show that the map is invertible on the set $\QQ_n$ of quasi-Stirling permutations. Let $a,b \in [n]$. Consider that an occurrence of a plateau $aa$ indicates a leaf with label $a$; an occurrence of $ab$ where $a\neq b$ and $a$ is appearing for the first time indicates that $a$ is the parent of $b$; an occurrence of $ab$ where $a\neq b$ and $a$ and $b$ are both appearing for the second time indicates that $a$ is a child of $b$; and an occurrence of $ab$ where $a\neq b$, $a$ is appearing for the second time, and $b$ is appearing for the first time indicates that $a$ and $b$ share a parent and $a$ lies directly to the left of $b$ in the embedding of the tree. 

Taken altogether, we can construct the tree $\varphi^{-1}(\pi)$ for any permutation $\pi \in \QQ_n$. First, we want to partition $\pi$ into $m$ blocks where the first
	element of each block is equal to the last. Thus
	\[
		\pi = [a_1 b_1 \cdots c_1 a_1][a_2 b_2 \cdots c_2 a_2]\cdots
		[a_mb_m\cdots c_m a_m].
	\]
	Since $\pi$ avoids $1212$ and $2121$, each block has either exactly 0 or 2 copies of each element in $[n]$.
	The root of $\varphi^{-1}(\pi)$ will have $m$ child nodes labeled from left-to-right
	$a_1,a_2,\ldots,a_m$.
	Continue this process recursively for each block to obtain the subtree associated to each child of the root. A block of the form $aa$ produces a leaf.
\end{proof}

In Example~\ref{varphi inverse example}, we demonstrate the inverse of $\varphi$. 

  	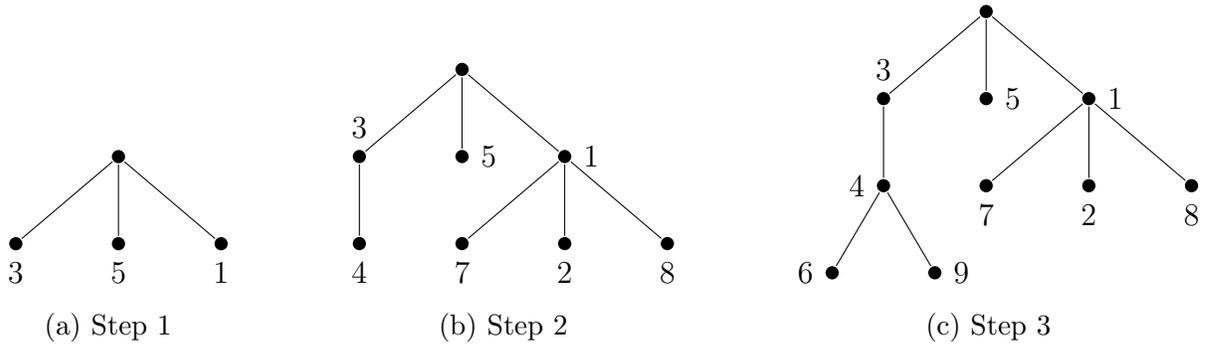
\begin{figure}[H]
	\centering
		\begin{tabular}{ccccc}
			\subcaptionbox{Step 1}{
			\forestset{
    my tree/.style={
      for tree={
        shape=circle,
        fill=black,
        minimum width=5pt,
        inner sep=1pt,
        anchor=center,
        line width=1pt,
        s sep+=25pt,
      },
    },
  }

  \begin{forest}
    my tree
    [, 
 [, label={below:3}]
      [, label={below:$5$}] 
      [, label = {below:$1$}]]
       \end{forest}
 } & \hspace{.1cm} &
			\subcaptionbox{Step 2}{
			\forestset{
    my tree/.style={
      for tree={
        shape=circle,
        fill=black,
        minimum width=5pt,
        inner sep=1pt,
        anchor=center,
        line width=1pt,
        s sep+=25pt,
      },
    },
  }

  \begin{forest}
    my tree
    [, 
 [, label={3}
      [,label = {below:$4$}] ]
      [, label={right:$5$}] 
      [,label={right:$1$}
      [,label={below:$7$}]
            [,label={below:$2$}]
      [, label = {below:$8$}]]]
       \end{forest}
}
		& \hspace{.1cm} &
			\subcaptionbox{Step 3}{
			\forestset{
    my tree/.style={
      for tree={
        shape=circle,
        fill=black,
        minimum width=5pt,
        inner sep=1pt,
        anchor=center,
        line width=1pt,
        s sep+=25pt,
      },
    },
  }

  \begin{forest}
    my tree
    [, 
 [, label={$3$}
      [,label = {left:$4$}
      [,label={left:$6$}]
      [,label={right:$9$}]
      ] ]
      [, label={right:$5$}] 
      [,label={right:$1$}
      [,label={below:$7$}]
            [,label={below:$2$}]
      [, label = {below:$8$}]]]
       \end{forest}			      
			 }
		\end{tabular}
		\caption{Step-by-step depiction of the tree-traversal in computing $\varphi(T)$ from Example \ref{varphi inverse example}.}
		\label{fig: inverse tree example step-by-step}
	\end{figure} 

\begin{example}\label{varphi inverse example}
	Given a quasi-Stirling permutation, $\pi = 346699435517722881$, we will construct the tree $T = \varphi^{-1}(\pi)$. A visual depiction of these three steps can be found in Figure \ref{fig: inverse tree example step-by-step}.
\begin{enumerate}
\item  Partition our permutation $\pi$ into subpermutations such that the
	first element of each subpermutation equals the last element of the
	subpermutation.
	\[
		\pi = [34669943][55][17722881]
	\]
	In this example $3, 5,$ and $1$ are the child nodes of the
	root of $T = \varphi^{-1}(\pi)$.

\item Next, partition each permutation of the children of our root, using the
	same rule in step one: first element is equal to the last element.
	Since the consecutive 5's indicate a plateau, we know that $5$ is a leaf in $T$ and
	that branch of the tree is complete.
	\[
		\pi = [{\color{gray}3}[466994]{\color{gray}3}][{\color{gray}55}][{\color{gray}1}[77][22][88]{\color{gray}1}]
	\]
	Therefore $4$ is the only child node of $3$, but node $1$ has three
	children: $7,2,$ and $8$.
	Also note that $7, 2,$ and $8$ are leafs, so that subtree is now complete.

\item  Now we can partition our last permutation, giving us the child
	nodes of $4$, which are $6$ and $9$.
	\[
		\pi = [{\color{gray}3}[{\color{gray}4}[66][99]{\color{gray}4}]{\color{gray}3}][{\color{gray}55}][{\color{gray}17722881}]
	\]
	Since the consecutive 6's and 9's indicate plateaus, we know our
	process is complete, and a quick traversal of the tree shows that we have
	the correct tree for our permutation.
\end{enumerate}
The final result can be seen in Figure \ref{fig: inverse tree example step-by-step}(c). 
\end{example}


Finally, we can use Theorem \ref{thm:treebij} to enumerate quasi-Stirling permutations. Since it is well known that unlabeled ordered rooted trees on $[n]$ are enumerated by the Catalan numbers (see for example \cite{StanleyBook}), we obtain an immediate 
enumeration of the quasi-Stirling permutations.

\begin{corollary}
For any $n\geq 1$, $|\QQ_n| = n!\cdot C_n$, where $C_n$ is the $n$th Catalan number. 
\end{corollary}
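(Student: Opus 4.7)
The plan is to combine Theorem~\ref{thm:treebij} with a standard count of ordered rooted trees. By Theorem~\ref{thm:treebij}, the map $\varphi$ is a bijection between $\T_n$ and $\QQ_n$, so it suffices to prove that $|\T_n| = n!\cdot C_n$.

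To count $|\T_n|$, I would decompose the data of an ordered rooted labeled tree into two independent choices: first, the underlying shape (i.e., the unlabeled ordered rooted tree on $n+1$ nodes obtained by forgetting the labels), and second, an assignment of the labels $1,2,\ldots,n$ to the $n$ non-root vertices. The number of unlabeled ordered rooted trees with $n$ non-root vertices is the Catalan number $C_n$, as cited from \cite{StanleyBook}. Once the shape is fixed, the $n$ non-root vertices are distinguishable (their positions in the embedding are determined by the ordered tree structure), so there are exactly $n!$ ways to assign the labels $[n]$ bijectively to these vertices. Multiplying gives $|\T_n| = n!\cdot C_n$.

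Combining these two observations yields $|\QQ_n| = |\T_n| = n!\cdot C_n$, as desired. There is no real obstacle here: the entire content of the corollary is packaged inside Theorem~\ref{thm:treebij} and the classical Catalan enumeration of ordered rooted trees, so the proof is essentially a one-line application of each.
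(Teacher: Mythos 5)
Your proof is correct and matches the paper's approach exactly: the paper likewise deduces the corollary immediately from Theorem~\ref{thm:treebij} together with the Catalan enumeration of unlabeled ordered rooted trees, with the $n!$ factor accounting for the labelings of the $n$ non-root vertices. Nothing further is needed.
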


It is clear from this result that quasi-Stirling permutations are in fact in bijection with several other combinatorial objects as well.

\begin{figure}[h]
\begin{center}
\renewcommand{\arraystretch}{1.1}
\begin{tabular}{|c|c|c|}\hline
Representative $\Lambda$ & Enumeration of $\QQ_n(\Lambda)$ & Theorem\\ \hline
$\{123,321\}$ & $0$, for $n \geq 5$ & Theorem \ref{123, 321} \\ \hline
$\{132,312\}$ & \multirow{3}{*}{$4 \cdot 3^{n-2}$, for $n \geq 2$}& Theorem \ref{132, 312} \\ \cline{1-1} \cline{3-3}
$\{132,231\}$ & & Theorem  \ref{132, 231}   \\ \cline{1-1} \cline{3-3}
$\{312,321\}$ & & Theorem  \ref{123, 132}  \\ \hline
$\{132,213\}$ & g.f.: $ \mathcal{A}(x) = \dfrac{(1-x)^2}{x^3 - 3x + 1}$ \rule{0pt}{.7cm} \rule[-.4cm]{0pt}{0pt}& Theorem \ref{132, 213}\\ \hline
$\{132,321\}$ & $2n^2 - 3n + 2$  & Theorem  \ref{123, 312} \\ \hline
$\{123,132,321\}$ & $0$, for  $n \geq 5$ & Theorem  \ref{123, 132, 321}  \\ \hline
$\{132,213,321\}$ & \multirow{3}{*}{$2n$,  for $n \geq 2$} & Theorem  \ref{132, 213, 321}  \\ \cline{1-1} \cline{3-3}
$\{123,213,312\}$ & & Theorem  \ref{123, 213, 312}\\ \cline{1-1} \cline{3-3}
$\{132,213,312\}$ & & Theorem \ref{132, 213, 312} \\ \hline
$\{123,132,213\}$ & $\frac{1}{4} \left( (1+\sqrt{2})^{n+1} + (1-\sqrt{2})^{n+1} - 2 \cdot (-1)^{n+1} \right)$ \rule{0pt}{.4cm} \rule[-.3cm]{0pt}{0pt} & Theorem \ref{123, 132, 213} \\ \hline
$\{123,132, 312\}$ & \rule{0pt}{.4cm} $ \frac{1}{2}(n^2+3n-2)$  \rule[-.3cm]{0pt}{0pt} & Theorem  \ref{123, 132, 312}  \\ \hline
$\{123,132,213,321\}$ &  \multirow{3}{*}{$0$, for $n\geq 5$} & \multirow{3}{*}{Theorem  \ref{123, 132, 213, 321 and more} } \\ \cline{1-1} 
$\{123,132,231,321\}$ &  & \\ \cline{1-1} 
$\{123,132,312,321\}$ & & \\ \hline
$\{123,132,213,231\}$ & $4$, for $n \geq 2$ & Theorem \ref{123, 132, 213, 231} \\ \hline
$\{123,132,231,312\}$ & $3$, for $n \geq 3$ & Theorem \ref{123, 132, 231, 312} \\ \hline
$\{132,213,231,312\}$ & $2$, for $n \geq 3$
      & Theorem \ref{132, 213, 231, 312} \\ \hline
$\{123,132,213,231,321\}$ &$0$, for $n \geq 5$ & Theorem \ref{123, 132, 213, 231, 321} \\ \hline
$\{123,132,213,231,312\}$ & $1$, for  $n \geq 3$
      & Theorem \ref{123, 132, 213, 231, 312}  \\ \hline
\end{tabular}
\end{center}
\caption{A representative of each Wilf-equivalence class is listed and the set of quasi-Stirling permutations that avoid that set of patterns is enumerated.}
\label{table}
\end{figure}

\section{Enumeration results}\label{sec: enumeration}

In this section, we consider the set of quasi-Stirling permutations that avoid a given set of patterns of length three. 
Using two trivial symmetries we can reduce the number of cases we need to consider. The \textbf{reverse} of the permutation $\pi= \pi_1\pi_2\ldots\pi_{2n}\in \S_{n,n}$ is the permutation $\pi^r= \pi_{2n}\pi_{2n-1} \ldots \pi_2\pi_1$ and the
\textbf{complement} of the permutation $\pi\in\S_{n,n}$ is the permutation $\pi^c$ where $\pi^c_i = n+1-\pi_i$. For example, if $\pi = 25513443661277$, then $\pi^r = 77216634431552$ and $\pi^c = 63375445227611$. Clearly, for any 
$\pi\in\S_{n,n}$,  $\pi$ avoids $\sigma$ if and only if $\pi^r$ avoids 
$\sigma^r$ and $\pi^c$ avoids $\sigma^c$. It is also clear that the set of quasi-Stirling permutations is closed under both of these operations. Thus, in this section, for each possible proper nontrivial subset $\Lambda\subseteq \S_3$, we need only consider subsets $\Lambda$ which cannot be obtained via complements or reverses of another subset on our list.


In the table in Figure \ref{table}, we list a representative (of the set of subsets obtained by the reverse and complement symmetries) for sets of size two through five of patterns in $\S_3$. This table 
provides a summary of the results on pattern avoidance in quasi-Stirling permutations that the authors have obtained.

In certain cases, we will need to make use of the following lemma. 
\begin{lemma}\label{no123and321}
For $n\geq 5$, no permutation $\pi \in \S_{n,n}$ avoids both 123 and 321. 
\end{lemma}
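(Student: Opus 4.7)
The plan is to reduce the lemma to the classical Erdős--Szekeres theorem by passing to a subpermutation of $\pi$ consisting of distinct values. Specifically, given $\pi \in \S_{n,n}$, I would form the subsequence $\pi'$ obtained by selecting, for each $a \in [n]$, the position of the first occurrence of $a$ in $\pi$. Since each of the $n$ values appears exactly once in $\pi'$, this subsequence is an honest permutation of $[n]$ (with distinct entries), and any pattern it contains is automatically a pattern of $\pi$.

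Next, I would invoke Erdős--Szekeres in its standard form: every sequence of distinct reals of length at least $(r-1)(s-1)+1$ contains either a strictly increasing subsequence of length $r$ or a strictly decreasing subsequence of length $s$. Taking $r=s=3$, any permutation of $[n]$ with $n\geq 5$ contains a strictly increasing or strictly decreasing subsequence of length $3$, i.e., an occurrence of $123$ or $321$. Applying this to $\pi'$ gives an occurrence of $123$ or $321$ in $\pi'$, which lifts to the same occurrence in $\pi$. Hence $\pi$ cannot simultaneously avoid $123$ and $321$ once $n \geq 5$.

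There is no real obstacle here: the only minor point to verify is that $\pi'$ has genuinely distinct entries so that a strictly monotone subsequence of length $3$ in $\pi'$ corresponds precisely to an occurrence of the pattern $123$ or $321$ (where equal values are not allowed in the pattern). This is immediate from the definition of $\pi'$ as the subsequence of first occurrences, so the lemma follows in a single application of Erdős--Szekeres. It is also worth noting in passing that the bound $n \geq 5$ is tight, as witnessed by sequences such as $22114433 \in \S_{4,4}$, which avoid both $123$ and $321$.
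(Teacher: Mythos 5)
Your proof is correct and follows essentially the same route as the paper, which likewise observes that any five distinct values among the entries of $\pi$ must contain an occurrence of $123$ or $321$ (the $r=s=3$ case of Erd\H{o}s--Szekeres); you simply make explicit the choice of the subsequence of first occurrences and the citation of the theorem. Your tightness example $22114433 \in \S_{4,4}$ is a nice addition not present in the paper.
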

Clearly this lemma holds, just as it does in $\S_n$. Indeed, given any subsequence of length five containing five distinct numbers, there must be an occurrence of either 123 or 321.

\subsection{Avoiding one pattern in $\S_3$}

In the case of single patterns, we can deduce that the patterns $123$ and $321$ are trivially $\QQ$-Wilf-equivalent since $123=(321)^r$. Similarly, $213$, $312$, $132$, and $231$ are all $\QQ$-Wilf-equivalent since $213=(312)^r=(231)^c=(132)^{rc}$.  It remains an open question to enumerate quasi-Stirling permutations that avoid a single pattern of length three. In the table in Figure~\ref{fig:avoid one}, we compute $\q_n(132)$ and $\q_n(123)$ for $n\in[5]$. 
\begin{figure}[H]
\centering
\begin{tabular}{|c|c|c|c|c|c|}
\hline
$n$ & 1 & 2 & 3 & 4& 5 \\ \hline
$\q_n(132)$ & 1& 4& 19& 102& 590 \\ \hline
$\q_n(123)$ & 1& 4& 19& 96& 510 \\ \hline
\end{tabular}
\caption{The values $\q_n(132)$ and $\q_n(123)$ for $n\in[5]$.}
\label{fig:avoid one}
\end{figure}

\subsection{Avoiding two patterns in $\S_3$}

\begin{theorem} 
\label{123, 321}
For all $n \geq 5$, $\q_n(123,321) =  0$.
\end{theorem}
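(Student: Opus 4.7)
The plan is to reduce the theorem immediately to Lemma \ref{no123and321} via the containment $\QQ_n \subseteq \S_{n,n}$. Recall that quasi-Stirling permutations are defined as $\QQ_n = \S_{n,n}(1212, 2121)$, so every $\pi \in \QQ_n$ is in particular a permutation of the multiset $[n,n]$. Thus if some $\pi \in \QQ_n$ were to avoid both $123$ and $321$, that same $\pi$ would be an element of $\S_{n,n}$ avoiding both $123$ and $321$, which Lemma \ref{no123and321} forbids whenever $n \geq 5$.

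Concretely, the write-up I envision is a single line: assume for contradiction that there exists $\pi \in \QQ_n(123, 321)$ with $n \geq 5$; since $\QQ_n \subseteq \S_{n,n}$, we have $\pi \in \S_{n,n}$ and $\pi$ avoids both $123$ and $321$, contradicting Lemma \ref{no123and321}. Therefore $\QQ_n(123, 321) = \emptyset$, and so $\q_n(123, 321) = 0$.

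I do not expect any genuine obstacle. The lemma does all the work, and the lemma itself is just the Erd\H{o}s--Szekeres theorem applied to any five distinct entries of $\pi$ (for instance, one occurrence each of $1, 2, 3, 4, 5$ yields a length-five subsequence of pairwise distinct values, which must contain a monotone subsequence of length three). The only thing worth checking --- and it is not really a subtlety --- is that the containment is used in the correct direction: imposing the additional avoidance conditions $1212$ and $2121$ only shrinks the ambient set, so a non-existence statement on $\S_{n,n}$ transfers without change to $\QQ_n$.
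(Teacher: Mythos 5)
Your proposal is correct and matches the paper's proof, which likewise cites Lemma \ref{no123and321} as an immediate consequence via the containment $\QQ_n \subseteq \S_{n,n}$. Your additional remarks on the direction of the containment and the Erd\H{o}s--Szekeres argument behind the lemma are accurate but not needed.
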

\begin{proof}
    This follows immediately from Lemma \ref{no123and321}.
    \end{proof}
    
    \begin{theorem} \label{132, 312}
For all $n \geq 2$, $\q_n(132, 312) = 4 \cdot 3^{n-2}$.
\end{theorem}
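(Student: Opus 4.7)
My plan is to set $a_n := \q_n(132,312)$, with the convention $a_0 = a_1 = 1$, and derive a recurrence by examining the final letter of $\pi \in \QQ_n(132, 312)$.

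The first step is to show that $\pi_{2n} \in \{1, n\}$: if $v := \pi_{2n}$ satisfies $1 < v < n$, then some position $i < 2n$ holds a value smaller than $v$ and some position $j < 2n$ holds a value larger than $v$; if $i < j$ the positions $(i, j, 2n)$ give a $132$ pattern, while if $j < i$ the positions $(j, i, 2n)$ give a $312$ pattern, contradicting the avoidance assumption.

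I would then count the permutations with $\pi_{2n} = n$. Let $p$ be the position of the first copy of $n$. The non-crossing arc condition (implied by avoidance of $1212$ and $2121$) forces the two occurrences of every other value to lie jointly in $[1, p-1]$ or jointly in $[p+1, 2n-1]$, giving the decomposition $\pi = \pi^{(1)}\, n\, \pi^{(2)}\, n$, where $\pi^{(1)}$ is a quasi-Stirling permutation on some $A \subseteq [n-1]$ and $\pi^{(2)}$ is one on $B = [n-1] \setminus A$. Two cross-block constraints do the real work. First, to avoid a $132$ in which the first $n$ plays the role of the middle letter, every element of $A$ must exceed every element of $B$; thus $A = \{m+1,\ldots,n-1\}$ and $B = \{1,\ldots,m\}$ for some $0 \le m \le n-1$. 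Second, to avoid a $312$ in which the first $n$ plays the role of the leftmost letter, $\pi^{(2)}$ must have no ascent, which together with the quasi-Stirling property forces $\pi^{(2)} = mm(m-1)(m-1)\cdots 11$. A direct verification then shows that once these two conditions hold, every remaining $132$/$312$ pattern crossing the decomposition or involving an $n$ is automatically ruled out, so $\pi^{(1)}$ can be any element of $\QQ_{n-1-m}(132, 312)$ (after relabelling its alphabet to $A$), contributing $a_{n-1-m}$ possibilities.

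Summing over $m$ yields $\sum_{k=0}^{n-1} a_k$ permutations in $\QQ_n(132, 312)$ with $\pi_{2n} = n$. Since the set $\{132,312\}$ is closed under complement, the same count holds for $\pi_{2n} = 1$, so $a_n = 2\sum_{k=0}^{n-1} a_k$ for all $n \ge 2$. Subtracting the analogous identity for $a_{n-1}$ (valid once $n \ge 3$) yields $a_n = 3\, a_{n-1}$, and the base case $a_2 = 2(a_0 + a_1) = 4$ then gives $a_n = 4 \cdot 3^{n-2}$ by induction. I expect the only delicate step to be the case-analysis check that the two highlighted constraints suffice to suppress every remaining cross-block $132$ or $312$ pattern.
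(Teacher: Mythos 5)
Your proof is correct and follows essentially the same route as the paper: both argue that $\pi_{2n}\in\{1,n\}$, decompose $\pi$ around the two copies of that extreme value, observe that the middle block is forced to be monotone and value-separated from the outer block, and arrive at the recurrence $a_n = 2\sum_{k=0}^{n-1}a_k$ with $a_0=a_1=1$. Your write-up is in fact somewhat more careful than the paper's (you verify the cross-block patterns explicitly and use complementation to handle the $\pi_{2n}=1$ case, where the paper treats it symmetrically by hand), and your recurrence corrects an evident typo in the paper's displayed sum.
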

\begin{proof}
Let $\Lambda=\{132, 312\}$ and $\pi \in \qbar_n(\Lambda)$. Notice that we must have $\pi_{2n} = n$ or $\pi_{2n}=1$. Indeed, if this were not the case, then for any $i,j<2n$ such that $\pi_i=1$ and $\pi_j=n$, we would have that either that $i<j$, in which case $\pi_i\pi_j\pi_{2n}$ is an occurrence of 132, or that $i>j$, in which case $\pi_j\pi_i\pi_{2n}$ is an occurrence of 312. Thus we have two cases:
\begin{enumerate}[(1)]
\item $\pi=\alpha 1\beta1$, or 
\item $\pi=\alpha n \beta n$. 
\end{enumerate}
In Case (1), $\beta$ is an increasing segment since $\pi$ avoids 132 and for each $a\in \alpha$ and $b\in\beta$, we must have that $a<b$ since $\pi$ avoids 312. In Case (2), $\beta$ is a decreasing segment since $\pi$ avoids 312 and for each $a\in \alpha$ and $b\in\beta$, we must have that $a>b$ since $\pi$ avoids 132. Therefore, for $n\geq 2$,
$$\q_n(\Lambda) = 2\sum_{i=0}^{n-1} \q_n(\Lambda).$$ Solving this recurrence with the initial conditions $q_0(\Lambda) = 1$ and $q_1(\Lambda) = 1$, we obtain the result.
\end{proof}

\begin{theorem} 
\label{132, 231}
For all $n \geq 2$, $\q_n(132, 231) = 4 \cdot 3^{n-2}$.
\end{theorem}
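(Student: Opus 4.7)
The plan is to establish the recurrence $\q_n(\Lambda) = 3\,\q_{n-1}(\Lambda)$ for $n \geq 3$ by analyzing the possible positions of the two copies of $n$ in $\pi \in \qbar_n(\Lambda)$, where $\Lambda = \{132, 231\}$. The key structural observation is that avoiding both $132$ and $231$ is equivalent to forbidding a ``middle peak'' among three distinct values: among the six patterns on three distinct letters, only $132$ and $231$ place the strict maximum at the central position. Hence for any $i < j < k$ with distinct $\pi_i, \pi_j, \pi_k$, the value $\pi_j$ cannot be the strict maximum of the three.

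Let $a < b$ be the positions of the two copies of $n$, and partition the remaining positions into $L = \{1, \ldots, a-1\}$, $M = \{a+1, \ldots, b-1\}$, and $R = \{b+1, \ldots, 2n\}$. Applying the no-middle-peak condition to triples with $\pi_a = n$ (respectively $\pi_b = n$) as the apex forces $\pi_i = \pi_k$ whenever $i \in L$ and $k \in M \cup R$ (respectively whenever $i \in L \cup M$ and $k \in R$), since all such positions carry values strictly less than $n$. Because every value $v < n$ appears exactly twice, if $L$ is nonempty and some position of $M \cup R$ carries a non-$n$ value, then all non-$n$ values in $L \cup M \cup R$ must coincide, forcing $n \leq 2$; a symmetric argument applies when $R$ is nonempty. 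Thus for $n \geq 3$, exactly one of the following configurations occurs: (i) $a = 1$ and $b = 2n$, (ii) $a = 1$ and $b = 2$, or (iii) $a = 2n-1$ and $b = 2n$.

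In each of these cases, deleting both copies of $n$ leaves a permutation of $[n-1, n-1]$, so $\pi$ has the form $n\sigma n$, $nn\rho$, or $\rho\, nn$ respectively. Since $n$ is the maximum value and sits at extremal positions, it cannot participate in any occurrence of $132$, $231$, $1212$, or $2121$, so each form gives a bijection between $\qbar_{n-1}(\Lambda)$ and the corresponding subfamily of $\qbar_n(\Lambda)$. The three configurations are pairwise disjoint for $n \geq 3$, yielding $\q_n(\Lambda) = 3\,\q_{n-1}(\Lambda)$. Combined with the base case $\q_2(\Lambda) = 4$ (every element of $\qbar_2$ vacuously avoids length-three patterns of three distinct values), this gives $\q_n(\Lambda) = 4 \cdot 3^{n-2}$. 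The main obstacle is the case analysis on the positions of $n$; once the three configurations are identified, the recurrence and base case finish the proof routinely.
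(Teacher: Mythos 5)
Your proof is correct and follows essentially the same route as the paper: both arguments show that for $n\geq 3$ the two copies of $n$ must occupy one of the three extremal configurations $nn\alpha$, $\alpha nn$, or $n\alpha n$ (since $n$ cannot sit strictly between two smaller, distinct entries without creating a $132$ or $231$), each case reducing bijectively to $\QQ_{n-1}(\Lambda)$ and yielding $\q_n(\Lambda)=3\q_{n-1}(\Lambda)$ with base case $\q_2(\Lambda)=4$. Your treatment is in fact slightly more careful than the paper's, since you explicitly dispose of the degenerate subcase $\pi_i=\pi_k$ using the multiplicity-two structure of $\S_{n,n}$.
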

\begin{proof}
    Let $\Lambda = \{132, 231\}$. For a given quasi-Stirling permutation $\pi \in \QQ_{n}(\Lambda)$ with $n>2$, $\pi$ must satisfy one of the following:
    \begin{enumerate}[(1)]
    \item $\pi=nn\alpha$, 
    \item$\pi=\alpha nn$, or 
    \item $\pi=n\alpha n$. 
    \end{enumerate}
    Indeed, if we have some $i<j<k$ so that $\pi_i\neq n$, $\pi_j=n$, and $\pi_k\neq n$, then $\pi_i\pi_j\pi_k$ would be an occurrence of 132 or 231. In each case, $\alpha$ is any permutation in $\QQ_{n-1}(\Lambda)$ since $n$ cannot be part of a 132 or a 231 pattern. We therefore have that for $n>2$, 
    $$\q_n(\Lambda) =3\q_{n-1}(\Lambda).$$
    Since there are four quasi-Stirling permutations in $\QQ_2(\Lambda)$, the result follows.
    \end{proof}
    
    \begin{theorem}
    \label{123, 132}
    For all $n \geq 2$, $\q_n(312, 321) = 4 \cdot 3^{n-2}$.
    \end{theorem}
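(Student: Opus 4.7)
The plan is to analyze where the two copies of $n$ appear in $\pi \in \qbar_n(\Lambda)$ (where $\Lambda = \{312, 321\}$) and derive a recurrence by deleting these two $n$'s, in the same spirit as Theorems~\ref{132, 312} and~\ref{132, 231}. The key observation is that both forbidden patterns begin with the maximum letter: if the first copy of $n$ in $\pi$ sits at position $i$, then at most one distinct value less than $n$ can appear in positions $j > i$. Indeed, if two distinct values $a, b < n$ both appeared there, then looking at the first occurrence of each past position $i$, together with $\pi_i = n$, would give a 312 pattern (if the smaller is first) or a 321 pattern (otherwise).

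Combining this with the quasi-Stirling constraint (which forbids any $b<n$ from having one copy before the first $n$ and one between the two $n$'s, or one between and one after the second $n$), I would show that every $\pi \in \qbar_n(\Lambda)$ takes exactly one of the following four forms, where $\alpha$ or $\alpha'$ denotes the remaining letters and $a<n$:
\begin{enumerate}
\item $\pi = \alpha\, n\, n$, with no letter appearing after the first $n$ besides the second $n$;
\item $\pi = \alpha'\, n\, a\, a\, n$, with both copies of $a$ between the two $n$'s and no $a$ in $\alpha'$;
\item $\pi = \alpha'\, n\, n\, a\, a$, with both copies of $a$ after the second $n$ and no $a$ in $\alpha'$;
\item $\pi = \alpha'\, n\, n\, a$, with the second copy of $a$ at position $2n$ of $\pi$ and the first copy inside $\alpha'$.
\end{enumerate}
Removing the two $n$'s produces a permutation $\pi'$ of $[n-1, n-1]$ that inherits avoidance of $1212, 2121, 312, 321$, so $\pi' \in \qbar_{n-1}(\Lambda)$. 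Conversely, for each $\pi' \in \qbar_{n-1}(\Lambda)$, forms (1) and (4) always yield a valid $\pi$ (form (4) because the last letter of $\pi'$ necessarily has its twin earlier), while forms (2) and (3) are available precisely when $\pi'$ ends in a repeated letter $aa$.

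Let $f(n) := \q_n(\Lambda)$ and $g(n) := |\{\pi \in \qbar_n(\Lambda) : \pi_{2n-1} = \pi_{2n}\}|$. The four insertion forms give
\[
f(n) = 2f(n-1) + 2g(n-1), \qquad g(n) = f(n-1) + g(n-1),
\]
the second recurrence following from the observation that only forms (1) (whose output ends in $nn$) and (3) (ending in $aa$) produce a $\pi$ ending in two equal letters. Subtracting twice the second from the first yields $f(n) - 2g(n) = 0$ for $n \geq 3$; since the base values $f(2) = 4$ and $g(2) = 2$ (the permutations $1122$ and $2211$) also satisfy this relation, $f(n) = 2g(n)$ for all $n \geq 2$. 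Substituting back gives $f(n) = 3f(n-1)$, and iterating with $f(2) = 4$ yields $f(n) = 4 \cdot 3^{n-2}$.

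The main obstacle will be verifying that each of the four structural forms, when built from $\pi' \in \qbar_{n-1}(\Lambda)$, really produces a permutation in $\qbar_n(\Lambda)$. For forms (1), (2), (3), avoidance of $312$ and $321$ follows because any putative pattern using $n$ as one of the two smaller entries is impossible (nothing exceeds $n$). Form (4) is the subtlest: the trailing $a$ at position $2n$ together with entries of $\alpha'$ could in principle form a $312$ or $321$ pattern, but any such would, after deletion of the $n$'s, produce the same pattern in $\pi'$ ending at $\pi'_{2n-2}=a$, contradicting $\pi' \in \qbar_{n-1}(\Lambda)$.
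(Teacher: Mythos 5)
Your proposal is correct and follows essentially the same route as the paper: the same four-case decomposition according to where the letters following the first $n$ lie, the same auxiliary count of permutations ending in a plateau, and the same resulting recurrence $\q_n(\Lambda)=3\q_{n-1}(\Lambda)$ with $\q_2(\Lambda)=4$. Your relation $f(n)=2g(n)$ is just the paper's observation $f_n=g_n$ in disguise, so nothing substantive differs.
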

    \begin{proof}
     Let $\Lambda = \{312, 321\}$ and $n>2$. Write $\pi \in\QQ_n(\Lambda)$ as $\pi=\alpha n \beta n \gamma$. Since $\pi$ avoids 312 and 321, we must have that $|\Set(\beta) \cup \Set(\gamma)|\leq 1$. Thus, 
    \begin{enumerate}[(1)]
    \item $\pi=\alpha nn$, 
    \item $\pi=\alpha nbbn$ for $b\in[n-1]$, 
    \item $\pi=\alpha nnbb$, for $b\in[n-1]$, or
    \item $\pi=\alpha_1b\alpha_2nnb$ for $b\in[n-1]$. 
    \end{enumerate}
    Let $F_n = \{\pi \in \QQ_n(\Lambda) \mid \pi_{2n-1}=\pi_{2n}\}$ and $G_n = \{\pi \in \QQ_n(\Lambda) \mid \pi_{2n-1}\neq\pi_{2n}\}.$ Denote the sizes of these sets by $f_n=|F_n|$ and $g_n=|G_n|$. Then clearly, $F_n$ and $G_n$ are disjoint sets so that $F_n\cup G_n=\QQ_n(\Lambda)$. Thus, $\q_n(\Lambda) = f_n+g_n$. Notice that Cases (1) and (3) above are enumerated by $f_n$ and Cases (2) and (4) are enumerated by $g_n$. In Case (1), $\alpha$ can be any permutation in $\QQ_{n-1}(\Lambda)$ and in Case (3), $\alpha bb$ can be any permutation in $F_{n-1}$. In Case (2), $\alpha bb$ is any permutation in $F_{n-1}$ and in Case (4), $\alpha_1b\alpha_2b$ is any permutation in $\QQ_{n-1}(\Lambda)$. By observing these cases, we can see that $f_n=\q_{n-1}(\Lambda) + f_{n-1}$ and that $g_n=\q_{n-1}(\Lambda) + f_{n-1}$. In particular, $f_n=g_n$. 
    
    Taken together we see that $$\q_n(\Lambda) = f_n+g_n = 2(\q_{n-1}(\Lambda) + f_{n-1}) = 3\q_{n-1}(\Lambda).$$ This recurrence together with the initial condition $\q_2(\Lambda) = 4$ gives us the result.
%
    \end{proof}
	
	For a set $\Lambda \subseteq \S_3$, define $$Q_\Lambda(x) = \sum_{n\geq 1}\sum_{\pi\in\QQ_n(\Lambda)} x^n.$$
	This is exactly the generating function for $\QQ_n(\Lambda)$. We will use this notation in the next theorem.
\begin{theorem}
	\label{132, 213}
	Let $\Lambda = \{132, 213\}$. Then
	$$\displaystyle Q_\Lambda(x) = \frac{(1-x)^2}{1 - 3x + x^3}.$$
\end{theorem}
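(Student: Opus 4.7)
The plan is to establish a recurrence for $q_n := \q_n(\{132,213\})$ by analyzing the positions of the two copies of $n$ in $\pi \in \QQ_n(\{132,213\})$, then convert it into a functional equation for $Q_\Lambda(x)$.

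Write $\pi = A\, n\, B\, n\, C$ with $A,B,C$ possibly empty. Three structural observations then constrain $\pi$. First, quasi-Stirling avoidance forces each $a\in[n-1]$ to have both copies inside $B$ or both copies outside, since otherwise $a,n,a,n$ (resp.\ $n,a,n,a$) realizes 1212 (resp.\ 2121). Second, any descent on values less than $n$ among positions to the left of the second $n$ produces a 213 pattern with that $n$, so $A$ is weakly increasing, $B$ is weakly increasing, and every value in $A$ is at most every value in $B$. Third, any ascending pair that straddles a copy of $n$ yields a 132 pattern, forcing the values of $A$ to dominate those of $B\cup C$ and the values of $B$ to dominate those of $C$. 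The two dominance relations combined with the disjointness of the supports of $A$ and $B$ imply that $A=\emptyset$ or $B=\emptyset$.

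This leads to two main cases. In Case~I ($A=\emptyset$, $B\neq\emptyset$), the elements of $B$ form a suffix $\{m+1,\ldots,n-1\}$, $B$ is the unique weakly increasing listing with each entry twice, and $C$ is arbitrary in $\QQ_m(\{132,213\})$, contributing $\sum_{m=0}^{n-2} q_m$. In Case~II ($B=\emptyset$, so $\pi=AnnC$) I further split on whether $A$ and $C$ share an element; dominance forces at most one shared element. If none (Case~II.i), then as before $A$ is uniquely determined and $C\in\QQ_m(\{132,213\})$, contributing $\sum_{m=0}^{n-1} q_m$.

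The main obstacle is Case~II.ii, in which exactly one element $v\in[n-1]$ is shared between $A$ and $C$. Here $A$ is forced to be $v(v+1)(v+1)\cdots(n-1)(n-1)$, and $C$ is a permutation of the multiset $\{v,\,1,1,2,2,\ldots,v-1,v-1\}$. The key subtlety is that the single copy of $v$ sitting in $A$ (before all of $C$) combines via 2121-avoidance with any element of $C$ whose two copies would straddle the sole $v$ inside $C$; this is forbidden. Writing $C = C_1\,v\,C_2$, the same trio of observations applied to $C$ with $v$ in the role of the maximum shows that $C_1$ is the unique weakly increasing listing of some suffix $\{v-t,\ldots,v-1\}$ of $[v-1]$ and $C_2\in\QQ_{v-t-1}(\{132,213\})$, yielding the contribution $\sum_{v=1}^{n-1}\sum_{s=0}^{v-1} q_s$.

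Translating to generating functions, and using $\sum_{n}\sum_{m=0}^{n-k} q_m\, x^n = \tfrac{x^k}{1-x}Q_\Lambda(x)$ together with $\sum_{n}\sum_{v=1}^{n-1}\sum_{s=0}^{v-1}q_s\, x^n = \tfrac{x^2}{(1-x)^2}Q_\Lambda(x)$, the three contributions (after accounting for the empty permutation) combine into the single equation
\[
Q_\Lambda(x)\bigl((1-x)^2 - (x+x^2-x^3)\bigr) = (1-x)^2,
\]
which simplifies to $Q_\Lambda(x)(1-3x+x^3) = (1-x)^2$ and gives the claimed formula. The hardest step is Case~II.ii: one must rule out more than one shared element and then unravel the nested self-similar decomposition of $C$ that arises from the simultaneous interaction of 132, 213, 1212, and 2121 avoidance.
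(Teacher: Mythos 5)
Your proof is correct, and its skeleton matches the paper's: decompose $\pi = \alpha n\beta n\gamma$, use 213- and 132-avoidance to force $\alpha$ and $\beta$ to be increasing with $\Set(\alpha)=\varnothing$ or $\Set(\beta)=\varnothing$, and convert the resulting recurrence into the functional equation $(1-3x+x^3)Q_\Lambda(x)=(1-x)^2$. Where you genuinely diverge is in the case $\pi=\alpha nn\gamma$ with $\Set(\alpha)\cap\Set(\gamma)=\{v\}$. The paper asserts there that, writing $\pi=\alpha_1 v\alpha_2 nn\gamma_1 v\gamma_2$, one has $\Set(\gamma_1)=\varnothing$, and it counts the disjoint sub-case by deleting $nn$ to land in $F_{n-1}\cup H_{n-1}$; both assertions are inaccurate as stated (for $n=3$ the valid permutation $233112$ has $\gamma_1=11$, and the insertion map sending $1221$ to $122331$ exits the disjoint sub-case), although the two discrepancies cancel and the paper's total $h_n=\q_{n-1}(\Lambda)+\q_{n-2}(\Lambda)$ is right. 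Your Case II.ii instead unwinds $\gamma=C_1vC_2$ directly, correctly observing that 2121-avoidance against the copy of $v$ sitting in $\alpha$ keeps each smaller value entirely inside $C_1$ or entirely inside $C_2$, that $C_1$ must be the increasing doubled listing of a suffix of $[v-1]$, and that $C_2$ is a free smaller instance; this yields the sub-count $\sum_{v=1}^{n-1}\sum_{s=0}^{v-1}\q_s(\Lambda)$, which matches the data (it gives $3$ for $n=3$, accounting for $233211$, $122331$, and $233112$). Your resulting recurrence is a different but equivalent identity for the sequence, and the generating-function bookkeeping checks out. In short: same overall strategy, but your treatment of the hardest sub-case is the more careful one and in effect repairs the weak spot in the paper's own argument.
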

\begin{proof}
Let $\Lambda=\{132, 213\}$ and $n\geq 2$. Write $\pi\in\QQ_n(\Lambda)$ as $\pi = \alpha n \beta n \gamma$. Since $\pi$ avoids 213, $\alpha$ and $\beta$ are increasing. If $a\in \Set(\alpha)$ and $b\in\Set(\beta)$, then either $a>b$ in which case $abn$ is an occurrence of 213, or $a<b$ in which case $anb$ is an occurrence of 132. We cannot have that $a=b$ since $anbn$ would then be an occurrence of 2121. Thus, we must have that either $\Set(\alpha) =\varnothing$ or $\Set(\beta) = \varnothing$. Thus there are four possibilities:
\begin{enumerate}[(1)]
\item $\pi=nn\gamma$,
\item $\pi = n\beta n\gamma$ where $\Set(\beta) \neq \varnothing$,
\item $\pi=\alpha nn \gamma$ where $\Set(\alpha) \neq \varnothing$ and $\Set(\alpha)\cap\Set(\gamma)=\varnothing$, or
\item $\pi = \alpha n n\gamma$ where $\Set(\alpha)\cap\Set(\gamma)\neq\varnothing$.
\end{enumerate}
Notice that for any $a\in\alpha$, $b\in\beta$, and $c\in\gamma$, we must have that $a\geq c$ and $b>c$ since $\pi$ avoids 132.
	Let
	\begin{align*}
		F_n & = \set{\pi \in \qbar_n(\Lambda) \mid \pi = nn \gamma \text{ (Case (1))}},\\
		G_n & = \set{\pi \in \qbar_n(\Lambda) \mid \pi = n\beta n\gamma, \Set(\beta)\neq\varnothing \text{ (Case (2))}}, and\\
		H_n & = \set{\pi \in \qbar_n(\Lambda) \mid \pi = \alpha nn \gamma, \Set(\alpha)\neq\varnothing \text{ (Case (3) and Case (4))}}. 
	\end{align*}
	We define $f_n = \abs{F_n}$,\ \,$g_n = \abs{G_n}$, and $h_n = \abs{H_n}$.
	Since $F_n$, $G_n$, and $H_n$ are disjoint sets with $F_n\cup G_n\cup H_n = \QQ_n(\Lambda)$, we have $\q_n(\Lambda) = f_n + g_n + h_n$.
	Let us consider each case.
	\begin{itemize}\itemsep=0em
	\item For all $n\geq 2$, we have that $\gamma$ is any permutation in $\QQ_{n-1}(\Lambda)$. Thus we have that $f_n = \q_{n-1}(\Lambda)$.
	\item For all $n\geq 2$, we have that $\pi=n(i+1)(i+1)\ldots (n-1)(n-1)n\gamma$ where $\gamma \in\QQ_{i}(\Lambda)$. Thus we have that
	\begin{align*}
		g_n = \sum_{k=0}^{n-2}\q_{k}(\Lambda).
	\end{align*}
	\item Finally, for all $n\geq 2$, $\alpha$ is increasing and $a\geq c$ for all $a\in\alpha$ and $c\in\gamma$. Thus we must have that $\alpha$ ends in $(n-1)$. If $\Set(\alpha)\cap\Set(\gamma)=\varnothing$, then $\alpha$ ends in $(n-1)(n-1)$ and thus $\alpha\gamma$ is a permutation in $F_n$ or $H_n$. If $\Set(\alpha)\cap\Set(\gamma)\neq\varnothing$, then we have $|\Set(\alpha)\cap\Set(\gamma)|=1$ since $\pi$ avoids 132. Given that $\Set(\alpha)\cap\Set(\gamma=\{a\}$, since $\pi$ avoids 132, we must have that $a=\min(\alpha)$ and $a=\max(\gamma)$. writing $\pi=\alpha_1a\alpha_2nn\gamma_1a\gamma_2$, we must have $\Set(\alpha_1)=\varnothing$ and $\Set(\gamma_1)=\varnothing$. Further, $\alpha_2$ is increasing and $\gamma_2$ is any permutation in $\QQ_{a-1}(\Lambda)$. 
	Therefore
	 $$h_n = f_{n-1} +
	h_{n-1} +  \sum_{k=0}^{n-2}\q_{k}(\Lambda)=  f_{n-1} +
	h_{n-1} +  \q_{n-2}(\Lambda) + g_{n-1}= \q_{n-2}(\Lambda) + \q_{n-1}(\Lambda).$$
	\end{itemize}
	Thus we obtain the recurrence $$\q_{n}(\Lambda)= \q_{n-1}(\Lambda) + \sum_{k=0}^{n-2} \q_{k}(\Lambda)+
	 \q_{n-2}(\Lambda) + \q_{n-1}(\Lambda) =  \q_{n-1}(\Lambda) + \q_{n-2}(\Lambda) + \sum_{k=0}^{n-1}\q_k(\Lambda).$$
	Solving for the generating function $Q_\Lambda(x)$, where $Q_\Lambda(x) = \sum_{n=0}^\infty \q_{n}(\Lambda)x^n$, we then obtain the result from this recurrence.
\end{proof}

    \begin{theorem} 
    \label{123, 312}
For $n \geq 2$, $\q_n(132,321) = 2n^2 - 3n + 2$.
\end{theorem}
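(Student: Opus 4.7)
The plan is to derive the recurrence $\q_n(\Lambda) = \q_{n-1}(\Lambda) + 4n - 5$ for $n \geq 2$ (with $\Lambda = \{132, 321\}$) by splitting on the positions of the two copies of $n$, and then telescope from $\q_1(\Lambda) = 1$ to obtain $\q_n(\Lambda) = 1 + \sum_{k=2}^{n}(4k - 5) = 2n^2 - 3n + 2$.

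I would split into Case A (the two copies of $n$ are adjacent, so $\pi = \alpha n n \beta$) and Case B (they are separated, so $\pi = \alpha n \beta n \gamma$ with $\beta \neq \varnothing$). In Case A, using $n$ as the middle entry of 132 forces $\min(\Set(\alpha)) \geq \max(\Set(\beta))$, and using (the first) $n$ as the top of 321 forces $\beta$ to have no descent between distinct values. If $\beta = \varnothing$, then $\alpha$ is an arbitrary element of $\QQ_{n-1}(\Lambda)$, contributing $\q_{n-1}(\Lambda)$ permutations. If $\beta \neq \varnothing$, then $1 \in \Set(\beta)$, and examining 321-patterns formed by two entries of $\alpha$ and one of $\beta$ forces $\alpha$ (restricted to values $\geq 2$) and $\beta$ to be in \emph{block form} (each repeated value $k$ appearing as an adjacent pair $kk$ in increasing order). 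Parameterizing by whether $\Set(\alpha)$ and $\Set(\beta)$ are disjoint or overlap at some value $v$, and treating carefully the case $v = 1$ where the single copy of $1$ inside $\alpha$ may be inserted at any of $n - 1$ ``block-boundary'' positions of $22\,33\cdots(n-1)(n-1)$, contributes $1 + (n-2) + (n-2) + (n-1) = 3n - 4$ further permutations.

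In Case B, the first step is to show $\gamma = \varnothing$: the 132-constraint (second $n$ as middle) gives $\min(\Set(\beta)) \geq \max(\Set(\gamma))$; the 321-constraint (first $n$ as top) gives the reverse inequality for any pair of distinct values; and 2121-avoidance forbids any shared value between $\beta$ and $\gamma$. These three conditions are simultaneously satisfiable only when $\gamma = \varnothing$. The same block-form analysis then pins down $\alpha$ and $\beta$ uniquely, parameterized by $m = |\Set(\beta)| \in \{1, 2, \ldots, n-1\}$, yielding $n - 1$ permutations. Summing everything, $\q_n(\Lambda) = \q_{n-1}(\Lambda) + (3n-4) + (n-1) = \q_{n-1}(\Lambda) + 4n - 5$, and the formula follows. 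The main obstacle will be the careful treatment of the Case A subcase with $v = 1$: naively requiring ``no distinct descent in $\alpha$'' (as in the $v \geq 2$ subcase) would undercount by $n - 2$, whereas descents in $\alpha$ ending at the value $1$ are genuinely allowed (since no element of $\beta$ lies strictly below $1$), giving exactly the additional flexibility that makes the recurrence balance.
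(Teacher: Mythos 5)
Your proposal is correct and follows essentially the same route as the paper: decompose $\pi$ around the two copies of $n$, show the resulting sub-blocks are forced to be (weakly) increasing with at most one value shared between the prefix and suffix, arrive at the recurrence $\q_n(\Lambda)=\q_{n-1}(\Lambda)+4n-5$, and telescope. Your bookkeeping groups the sub-cases slightly differently (adjacent vs.\ separated copies of $n$, and the $v=1$ insertion count of $n-1$ in place of the paper's $1+(n-2)$), but the individual contributions $\q_{n-1}(\Lambda)$, $n-1$, $n-1$, and $2n-3$ are identical to those in the paper's Cases (1)--(4).
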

\begin{proof}
Let $\Lambda=\{132,321\}$ and $n>2$. Write $\pi \in \QQ_n(123, 312)$ as $\pi = \alpha n \beta n \gamma$. 
Since $\pi$ avoids $132$, we must have that for any $a\in\alpha$, $b\in\beta$, and $c\in\gamma$, $a>b$, $b>c,$ and $a\geq c$. Since $\pi$ avoids 321, it must be the case that at least one of $\Set(\beta)$ and $\Set(\gamma)$ is empty. Thus there are four possibilities:
\begin{enumerate}[(1)]
\item $\pi=\alpha nn$,
\item $\pi = \alpha n \beta n$ where $\Set(\beta)\neq \varnothing$,
\item $\pi=\alpha nn \gamma$ where $\Set(\gamma)\neq \varnothing$ and $\Set(\alpha)\cap\Set(\gamma) =\varnothing$, or
\item $\pi=\alpha nn \gamma$ where $\Set(\alpha)\cap\Set(\gamma) \neq\varnothing$. 
\end{enumerate}
In Case (1), $\alpha$ is any permutation in $\QQ_n(\Lambda)$. In Case (2), since $\pi$ avoid 321, $\beta$ must be increasing and since $\Set(\beta)\neq \varnothing$, $\alpha$ is also increasing. Thus there are $n-1$ permutations in this case. In Case (3), we similarly have that $\alpha$ and $\gamma$ are increasing and there are $n-1$ permutations in this case.

Finally, consider Case (4). Since $\pi$ avoids 132, we must have that $|\Set(\alpha)\cap\Set(\gamma)|= 1$, $a=\min(\Set(\alpha))$, and $a=\max(\Set(\gamma))$. Let us write $\pi = \alpha_1 a \alpha_2 nn\gamma_1a\gamma_2.$ Then $\alpha_1$, $\alpha_2$, and $\gamma_1$ are increasing and $\Set(\gamma_2) = \varnothing$. Further, since $\pi$ avoids 321, at least one of $\Set(\alpha_1)$ and $\Set(\gamma_1)$ are empty. There is one way for both $\Set(\alpha_1)$ and $\Set(\gamma_1)$ to be empty, there is $(n-2)$ permutations of the form $\alpha_1a\alpha_2nna$ with $\Set(\alpha_1)\neq \varnothing$, and there is $(n-2)$ permutations of the form $a\alpha_2nn\gamma_1a$ with $\Set(\gamma_1)\neq\varnothing$.
	
	Then we obtain the recurrence: 
	$$\q_{n}(\Lambda) = \q_{n-1}(\Lambda) + (n-1)+(n-1)+1+(n-2)+(n-2)
	 =\q_{n-1}(\Lambda) +4n-5 .$$ 
	 Solving this recurrence, we obtain the result.
	\end{proof}

\subsection{Avoiding three patterns in $\S_3$}

\begin{theorem}
\label{123, 132, 321}
For all $n \geq 5$, $\q_n(123,132, 321) =  0$.
\end{theorem}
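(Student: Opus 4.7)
The plan is to observe that this statement is an immediate corollary of Lemma \ref{no123and321}. That lemma asserts that for $n \geq 5$, no permutation $\pi \in \S_{n,n}$ avoids both $123$ and $321$, because any five distinct values in the sequence must form an occurrence of one of these two monotone patterns of length three. Since $\QQ_n(123, 132, 321) \subseteq \S_{n,n}(123, 321)$, we inherit emptiness for free.

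Concretely, I would write one sentence: since every $\pi \in \QQ_n(123, 132, 321)$ lies in $\S_{n,n}$ and avoids both $123$ and $321$, Lemma \ref{no123and321} applies and forces the set to be empty for $n \geq 5$. The third pattern $132$ is entirely vestigial here; it only tightens the constraint further.

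There is no real obstacle. The only thing to double-check is that the lemma's hypothesis is met: we need five distinct labels, and since $n \geq 5$ we certainly have values $1, 2, 3, 4, 5 \in [n]$ appearing in $\pi$, so any selection of one occurrence of each yields a subsequence of five distinct entries, which must contain a monotone subsequence of length three by the Erd\H{o}s--Szekeres argument implicit in Lemma \ref{no123and321}. Hence the proof reduces to a single invocation of that lemma.
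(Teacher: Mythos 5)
Your proposal is correct and matches the paper's proof exactly: the paper also disposes of this theorem by a single invocation of Lemma \ref{no123and321}, with the pattern $132$ playing no role. Nothing further is needed.
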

\begin{proof}
    This follows immediately from Lemma \ref{no123and321}.
    \end{proof}

    \begin{theorem}
\label{132, 213, 321}
For all $n \geq 2$, $\q_n(132,213,321) =  2n$.
\end{theorem}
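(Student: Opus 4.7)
The plan is to decompose any $\pi \in \QQ_n(\Lambda)$ with $\Lambda = \{132, 213, 321\}$ around its two occurrences of $n$, writing $\pi = \alpha\, n\, \beta\, n\, \gamma$, and then use the forbidden patterns to force $\pi$ into a very restricted form.

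First I would argue that $\alpha$ and $\beta$ cannot both be nonempty: given a value $a \in \alpha$ and a value $b \in \beta$, equality $a = b$ produces a $1212$-pattern $anan$ using both copies of $n$; the inequality $a < b$ produces a $132$-pattern $anb$; and $a > b$ produces a $213$-pattern $abn$ (taking the second $n$ as the final element). Hence $\alpha = \varnothing$ or $\beta = \varnothing$.

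Case~A ($\alpha = \varnothing$, so $\pi = n\, \beta\, n\, \gamma$) contributes only two permutations. An entirely parallel argument shows $\beta$ and $\gamma$ cannot both be nonempty: for $b \in \beta$ and $c \in \gamma$, equality produces $2121$ via $nbnb$, the inequality $b < c$ gives a $132$ from $bnc$, and $b > c$ gives a $321$ from $nbc$. Whichever of $\beta$ or $\gamma$ is nonempty must moreover be weakly increasing, because any descent $x_i > x_{i+1}$ together with a flanking $n$ produces a $321$-triple $(n, x_i, x_{i+1})$. Since this segment must use every element of $\{1, \ldots, n-1\}$ with multiplicity two, it is forced to equal $11\,22\cdots(n-1)(n-1)$, giving exactly the two permutations $nn\,11\,22\cdots(n-1)(n-1)$ and $n\,11\,22\cdots(n-1)(n-1)\,n$.

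Case~B ($\alpha \neq \varnothing$ and $\beta = \varnothing$, so $\pi = \alpha\, nn\, \gamma$) is the main case. Three local observations control the structure: avoiding $213$ on triples $(a_1, a_2, n)$ forces $\alpha$ to be weakly increasing; avoiding $321$ on triples $(n, c_1, c_2)$ forces $\gamma$ to be weakly increasing; and avoiding $132$ on triples $(a, n, c)$ forces every element of $\alpha$ to be at least every element of $\gamma$. Moreover, if two distinct values were each split between $\alpha$ and $\gamma$, their four copies would yield a $1212$-pattern, so at most one value is shared. A short case split then shows that either the value sets are disjoint with $V_\alpha = \{j+1, \ldots, n-1\}$ and $V_\gamma = \{1, \ldots, j\}$ for some $j \in \{0, \ldots, n-2\}$ (contributing $n-1$ permutations), or exactly one value $v$ is shared, with $V_\alpha = \{v, v+1, \ldots, n-1\}$ and $V_\gamma = \{1, \ldots, v\}$ for some $v \in \{1, \ldots, n-1\}$ (contributing another $n-1$ permutations). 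Summing yields $2 + 2(n-1) = 2n$.

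The main obstacle I anticipate is the $1212$ bookkeeping in Case~B: while the monotonicity and dominance conditions on $\alpha$ and $\gamma$ follow quickly from the length-three constraints, one must separately observe that at most one value can cross between $\alpha$ and $\gamma$, and then stratify the count into the disjoint-sets and single-shared-value regimes without double counting the overlap.
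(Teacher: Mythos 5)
Your proof is correct, and while it starts from the same skeleton as the paper's (write $\pi=\alpha n\beta n\gamma$ and extract monotonicity and dominance constraints from the three forbidden patterns), your case analysis diverges from the paper's in two substantive ways -- and in both places yours is the right one. First, you observe that $\Set(\beta)$ and $\Set(\gamma)$ cannot both be nonempty, since $b>c$ (forced by $132$/$2121$) makes $n\,b\,c$ an occurrence of $321$; the paper's proof overlooks this and counts configurations such as $322311$ (for $n=3$), which contains the $321$-subsequence $3,2,1$ and does not belong to $\QQ_3(\Lambda)$. Second, you correctly allow exactly one value to be shared between $\alpha$ and $\gamma$ and count those $n-1$ permutations separately; the paper instead asserts that $a>c$ for all $a\in\Set(\alpha)$, $c\in\Set(\gamma)$ when $n\geq 3$, which is false -- $122331$ and $233112$ are in $\QQ_3(\Lambda)$ and have $\Set(\alpha)\cap\Set(\gamma)\neq\varnothing$. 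These two errors in the paper happen to cancel numerically ($n-2$ spurious permutations plus one double-counted configuration versus $n-1$ missed ones), so the stated formula $2n$ is right, but your stratification into $2+(n-1)+(n-1)$ is the one that actually accounts for the correct set of permutations. The only thing I would add to your write-up is the routine verification that the $2n$ candidates your necessary conditions produce do all avoid $132$, $213$, $321$, $1212$, and $2121$ -- the shared-value permutations $v(v{+}1)(v{+}1)\cdots(n{-}1)(n{-}1)nn11\cdots(v{-}1)(v{-}1)v$ in particular deserve a sentence, since the split value $v$ creates non-adjacent copies, but its two occurrences interleave with the $n$'s only as the harmless pattern $1221$.
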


\begin{proof}
Let $\Lambda = \{132,213,321\}$. For any $\pi \in \QQ_n(\Lambda)$, write $\pi$ as $\pi=\alpha n\beta n\gamma$. Since $\pi$ avoids 213, $\alpha$ must be increasing. Since $\pi$ avoids 321, both $\beta$ and $\gamma$ are increasing as well. In addition, for any $a\in\Set(\alpha)$, $b\in\Set(\beta)$, and $c\in\Set(\gamma)$, $a> b$, $b> c$, and $a\geq c$ since $\pi$ avoids 132. For $n\geq 3$, we must also have $a>c$ since $\pi$ avoids 213 and 132. Notice that since $\pi$ avoids 213 and $a>b$ for any $a\in\Set(\alpha)$ and $b\in\Set(\beta)$, we must actually have that either $\Set(\alpha) = \varnothing$ or $\Set(\beta) = \varnothing$. 

Thus either $\{\Set(\alpha), \Set(\gamma)\}$ or $\{\Set(\beta), \Set(\gamma)\}$ is a partition of $[n-1]$ into two (possibly empty) disjoint sets of consecutive numbers. There are exactly $n$ ways to do this, thus there are $2n$ permutations in $\QQ_n(\Lambda)$.
\end{proof}

\begin{example}
For an example of the process in the proof of the previous theorem, suppose the partition of $[8]$ into two subsets is given by $$\{\{1,2,3\}, \{4,5,6,7,8\}\}.$$ Then either $\Set(\alpha) =\{4, 5,6,7,8\}$ and $\Set(\gamma) = \{1,2,3\}$ in which case  $\pi = 445566778899112233,$ or $\Set(\beta) =\{4, 5,6,7,8\}$ and $\Set(\gamma) = \{1,2,3\}$ in which case  $\pi = 994455667788112233.$
\end{example}

\

    \begin{theorem}
\label{123, 213, 312}
For all $n \geq 2$, $\q_n(123,213,312) =  2n$.
\end{theorem}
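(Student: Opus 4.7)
The plan is to analyze $\pi \in \QQ_n(\Lambda)$ (with $\Lambda = \{123, 213, 312\}$) by tracking the positions of the two copies of the smallest value, $1$. The crucial observation is that if some copy of $1$ sits at a position $p$ with a non-$1$ entry $\pi_L$ at some $L < p$ and a \emph{distinct} non-$1$ entry $\pi_R$ at some $R > p$, then the subsequence $\pi_L\, 1\, \pi_R$ realizes $213$ (when $\pi_L < \pi_R$) or $312$ (when $\pi_L > \pi_R$), both forbidden. For $n \geq 3$ the set $\{2, 3, \ldots, n\}$ contains at least two distinct values, so this obstruction forces every copy of $1$ to have all non-$1$ entries on one side.

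A short position-counting argument then shows that the positions $p_1 < p_2$ of the two $1$'s must form one of the pairs $(1, 2n)$, $(1, 2)$, or $(2n-1, 2n)$. Indeed, applying the observation at $p_1$: either its left is free of non-$1$'s (forcing $p_1 = 1$) or its right is (forcing $p_1 = 2n-1$ and $p_2 = 2n$, since only one other copy of $1$ remains to fill the positions past $p_1$); applying it at $p_2$ gives the symmetric constraint, and combining yields the three configurations $\pi = 1\, \sigma\, 1$, $\pi = 11\, \sigma$, and $\pi = \sigma\, 11$, where $\sigma$ is a permutation of $\{2, 2, \ldots, n, n\}$.

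Next I would handle each case. In the first two, the leading $1$ together with any strictly increasing pair in $\sigma$ produces a $123$, so $\sigma$ must be weakly decreasing; this pins $\sigma$ down uniquely as $nn(n-1)(n-1)\cdots 22$, contributing one permutation each. In the third case, the trailing $1$'s cannot participate in any occurrence of $123$, $213$, or $312$, since $1$ is the global minimum and cannot play the role of middle or largest entry in such patterns; hence $\sigma$ may be any element of $\QQ_{n-1}(\Lambda)$ (after relabeling $v \mapsto v - 1$), contributing $\q_{n-1}(\Lambda)$ permutations.

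Combining these counts gives the recurrence $\q_n(\Lambda) = \q_{n-1}(\Lambda) + 2$ for $n \geq 3$, with base case $\q_2(\Lambda) = 4$ (each of the four elements of $\QQ_2$ trivially avoids every length-$3$ pattern on three distinct values, since it uses only two distinct values). Solving yields $\q_n(\Lambda) = 2n$, as desired. I expect the most delicate step to be the position-counting argument restricting $(p_1, p_2)$ to the three listed configurations; once that structural information is in hand, the case-by-case counting and resulting recurrence are routine.
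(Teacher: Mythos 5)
Your proof is correct, but it takes a genuinely different route from the paper's. The paper decomposes $\pi$ around the two copies of the \emph{maximum} as $\pi = \alpha n \beta n \gamma$, argues that $\gamma$ is decreasing and that $|\Set(\alpha)\cup\Set(\beta)|\leq 1$, and then counts explicit forms directly to reach $2n$; you instead localize the two copies of the \emph{minimum}, show each copy of $1$ must have all non-$1$ entries on one side (which is where you need $n\geq 3$ so that $\{2,\dots,n\}$ has two distinct values), reduce to the three shapes $1\sigma 1$, $11\sigma$, $\sigma 11$, and derive the recurrence $\q_n(\Lambda)=\q_{n-1}(\Lambda)+2$. All of your steps check out: the obstruction $\pi_L\,1\,\pi_R$ realizing $213$ or $312$ is right, the position analysis forcing $(p_1,p_2)\in\{(1,2n),(1,2),(2n-1,2n)\}$ is airtight, the first two cases do pin $\sigma$ to the unique weakly decreasing word, and in the third case the trailing $1$'s genuinely cannot participate in any forbidden pattern (nor in a $1212$/$2121$), so the count there is exactly $\q_{n-1}(\Lambda)$. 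What your approach buys is robustness: the recursive structure makes the verification essentially local, whereas the paper's direct listing is more delicate --- as written, its case $\pi=nbbn\gamma$ for arbitrary $b\in[n-1]$ actually contains a $312$ (namely $n\,b\,c$ for $c\in\gamma$ with $b<c<n$) unless $b=n-1$, and it omits the valid shapes $a\,nn\,\gamma_1 a\gamma_2$ for $a<n-1$; the two discrepancies cancel in the total, but your decomposition avoids that pitfall entirely, at the mild cost of needing the base case $\q_2(\Lambda)=4$ and a one-line recurrence solve.
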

\begin{proof}
Let $\Lambda = \{123, 213, 312\}$ and $n\geq 2$. Suppose $\pi\in\QQ_n(\Lambda)$ with $\pi = \alpha n \beta n \gamma$. Since $\pi$ avoids 312, we must have that $\gamma$ is decreasing. Further, since $\pi$ avoids 123 and 213, we must have that $|\Set(\alpha)\cup \Set(\beta)|\leq 1$. Thus, there are four possibilities (all of which include $\gamma$ as a decreasing segment):
\begin{enumerate}[(1)]
\item $\pi = aann\gamma$ where $a\in[n-1]$, 
\item $\pi = nbbn\gamma$ where $b\in[n-1]$, 
\item $\pi = nn\gamma$, or 
\item $\pi = (n-1)nn(n-1)\gamma$.
\end{enumerate}
Since $\gamma$ is always determined, there are clearly $2n$ total permutations.
\end{proof}

    \begin{theorem}
\label{132, 213, 312}
For all $n \geq 2$, $\q_n(132,213,312) =  2n$.
\end{theorem}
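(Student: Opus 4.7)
The plan is to follow the template of the nearby theorems: write any $\pi \in \QQ_n(\Lambda)$ (for $\Lambda = \{132,213,312\}$) in the form $\pi = \alpha n \beta n \gamma$ and use the three pattern conditions to force $\pi$ into a small, completely described collection.

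First I would extract the local constraints. Considering three-element subsequences that involve $a \in \Set(\alpha)$ and $b \in \Set(\beta)$ together with one of the two copies of $n$: the subsequence $anb$ is a $132$ whenever $a<b$, while $abn$ is a $213$ whenever $a>b$; and $a=b$ would create the forbidden $1212$ via $anbn$. So either $\Set(\alpha)=\varnothing$ or $\Set(\beta)=\varnothing$. Next, looking only inside each block, $a_1a_2n$ being a $213$ forces $\alpha$ weakly increasing, and $\beta$ weakly increasing by the same reasoning; $nb_1b_2$ being a $312$ forces $\beta$ to have no strict ascent, so $\beta$ is constant; $nc_1c_2$ being a $312$ forces $\gamma$ weakly decreasing. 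Finally, cross-block comparisons via $anc$ and $bnc$ being $132$ give $a \geq c$ for all $a\in\Set(\alpha),c\in\Set(\gamma)$ and $b\geq c$ for all $b\in\Set(\beta),c\in\Set(\gamma)$; in particular (by the same $anc$ observation) $|\Set(\alpha)\cap\Set(\gamma)|\leq 1$, with equality only when that common value is simultaneously $\min\Set(\alpha)$ and $\max\Set(\gamma)$.

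With the structure pinned down, the enumeration splits into three cases. \textbf{Case 1:} $\Set(\alpha)=\Set(\beta)=\varnothing$ gives $\pi=nn\gamma$ with $\gamma$ weakly decreasing on $[n-1,n-1]$, so $\gamma=(n-1)(n-1)\cdots 11$; exactly $1$ permutation. \textbf{Case 2:} $\Set(\alpha)=\varnothing$, $\Set(\beta)\neq\varnothing$ forces $\beta=bb$; since $b\geq c$ for every $c$ appearing in $\gamma$, and every value of $[n-1]\setminus\{b\}$ must appear in $\gamma$, one gets $b=n-1$ and $\gamma=(n-2)(n-2)\cdots 11$; exactly $1$ permutation. \textbf{Case 3:} $\Set(\beta)=\varnothing$, $\Set(\alpha)\neq\varnothing$, so $\pi=\alpha nn\gamma$. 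Splitting on the overlap: the disjoint subcase, where $\Set(\alpha)=\{k+1,\ldots,n-1\}$ and $\Set(\gamma)=\{1,\ldots,k\}$ for $k\in\{0,\ldots,n-2\}$, contributes $n-1$ permutations; the overlap-by-one subcase, where the common value $v$ satisfies $v=\min\Set(\alpha)=\max\Set(\gamma)$ and determines $\alpha=v(v+1)(v+1)\cdots(n-1)(n-1)$ and $\gamma=v(v-1)(v-1)\cdots 11$ for $v\in\{1,\ldots,n-1\}$, contributes another $n-1$ permutations. Summing, $\q_n(\Lambda)=1+1+(n-1)+(n-1)=2n$.

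The bookkeeping of the pattern constraints is routine; the main obstacle is the overlap analysis in Case 3, where one must argue that (i) the shared value $v$ must sit in exactly the predicted extremal positions of $\alpha$ and $\gamma$ and (ii) no other overlaps are possible, since an additional common value $v'<v$ would place $v'\in\alpha$ before $v\in\gamma$, producing a $132$ via the intervening $n$. Once this is established, the partition into the three cases is clean and no double-counting can occur because they are distinguished by which of $\Set(\alpha)$ and $\Set(\beta)$ is empty.
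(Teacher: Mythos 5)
Your proof is correct, and it is in fact more careful than the argument printed in the paper. Both start from the decomposition $\pi = \alpha n \beta n \gamma$, but you extract two constraints that the paper's proof misses. First, you note that $\beta$ must be weakly increasing (a strict descent in $\beta$ followed by the second $n$ is a $213$) as well as weakly decreasing (from $312$ with the leading $n$), hence constant; the paper records only that $\beta$ is decreasing, and consequently its enumeration admits permutations such as $988776655449332211$ (the example following the theorem), which actually contains $213$ as the subsequence $8,7,9$. Second, you allow $\Set(\alpha)\cap\Set(\gamma)$ to be a singleton $\{v\}$ with $v=\min\Set(\alpha)=\max\Set(\gamma)$, and such permutations genuinely occur (e.g.\ $233211\in\QQ_3(132,213,312)$), whereas the paper asserts $a>c$ for all $a\in\Set(\alpha)$, $c\in\Set(\gamma)$ and so treats $\{\Set(\alpha),\Set(\gamma)\}$ as a disjoint partition of $[n-1]$. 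The paper's total of $2n$ survives only because its inaccuracies cancel: it counts the permutation $nn(n-1)(n-1)\cdots 11$ twice (empty top block placed in $\alpha$ or in $\beta$), it includes $n-2$ invalid permutations with $|\Set(\beta)|\geq 2$, and it omits the $n-1$ overlap permutations, and $-1-(n-2)+(n-1)=0$. Your accounting $1+1+(n-1)+(n-1)=2n$ is the correct one. The only step you leave implicit is that a value cannot occur once in $\beta$ and once in $\gamma$ (that would be a $2121$), which is what guarantees $\beta=bb$ in your Case 2; this is the same observation you already made explicitly for $\alpha$ versus $\beta$, so it is a trivial omission.
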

    
\begin{proof}
	This proof is very similar to the proof of Theorem \ref{132, 213, 321}. Let $\Lambda = \{132,213,312\}$ and $n\geq 2$. Write $\pi \in \qbar_{n}(\Lambda)$ as $\pi = \alpha n \beta n \gamma$.

	Since $\pi$ avoids 213, $\alpha$ must be increasing, and since $\pi$ avoids 312, both $\beta$ and $\gamma$ are decreasing. In addition, for any $a\in\Set(\alpha)$, $b\in\Set(\beta)$, and $c\in\Set(\gamma)$, $a> b$, $b> c$, and $a\geq c$ since $\pi$ avoids 132. For $n\geq 3$, either $\Set(\beta) \neq \varnothing$, in which $a>c$ since $\pi$ avoids 132 and 213, or $\Set(\beta) = \varnothing$ in which we must also have $a>c$ since $\pi$ avoids 213 and 132. Notice that since $\pi$ avoids 213 and $a>b$ for any $a\in\Set(\alpha)$ and $b\in\Set(\beta)$, we must actually have that either $\Set(\alpha) = \varnothing$ or $\Set(\beta) = \varnothing$. 

Thus either $\{\Set(\alpha), \Set(\gamma)\}$ or $\{\Set(\beta), \Set(\gamma)\}$ is a partition of $[n-1]$ into two (possibly empty) disjoint sets of consecutive numbers. There are exactly $n$ ways to do this, thus there are $2n$ permutations.

\end{proof}
\begin{example}
For an example of the process in the proof of the previous theorem, suppose the partition of $[8]$ into two subsets is given by $$\{\{1,2,3\}, \{4, 5,6,7,8\}\}.$$ Then either $\Set(\alpha) =\{4,5,6,7,8\}$ and $\Set(\gamma) = \{1,2,3\}$, in which case  we obtain the permutation $\pi = 445566778899332211,$ or $\Set(\beta) =\{4,5,6,7,8\}$ and $\Set(\gamma) = \{1,2,3\}$, in which case  $\pi = 988776655449332211.$ 
\end{example}

\begin{theorem}\label{123, 132, 213}
	For all $n \geq 1$, $\q_n(123, 132, 213) =  \dfrac{(1+\sqrt{2})^{n+1}}{4} + \dfrac{(1-\sqrt{2})^{n+1}}{4} - \dfrac{(-1)^{n+1}}{2}$.
\end{theorem}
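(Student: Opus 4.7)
The plan is to establish the linear recurrence
\[
\q_n(\Lambda) = \q_{n-1}(\Lambda) + 3\q_{n-2}(\Lambda) + \q_{n-3}(\Lambda) \qquad (n \geq 3)
\]
for $\Lambda = \{123, 132, 213\}$, and then solve it. The core structural observation is that a permutation $\pi \in \S_{n,n}$ avoids all three patterns if and only if for every triple of positions $i_1 < i_2 < i_3$ whose values are pairwise distinct we have $\pi_{i_1} > \pi_{i_3}$. The ``only if'' direction is a short case check: if $\pi_{i_1} < \pi_{i_3}$, then whichever of the three relative ranges $\pi_{i_2}$ falls into, the triple realizes one of 213, 123, or 132. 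Applied to triples anchored at position $1$, this characterization forces $\pi_1 \in \{n-1, n\}$, since two distinct values exceeding $\pi_1$ could not both appear later.

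In Case A ($\pi_1 = n$), let $q_2$ denote the position of the second $n$. Any two interior positions with distinct values, together with $q_2$, would form a forbidden triple with maximum at $i_3$; combined with the quasi-Stirling requirement that the interior be a union of complete pairs, this forces $q_2 \in \{2, 4\}$. When $q_2 = 2$, the suffix is an arbitrary element of $\QQ_{n-1}(\Lambda)$. When $q_2 = 4$, the interior $\pi_{[2,3]}$ is a plateau $aa$, and the characterization applied to $(2, 4, k)$ for $k > 4$ forces $a > \pi_k$ for every such $k$, whence $a = n-1$ and the tail ranges over $\QQ_{n-2}(\Lambda)$. Case A therefore contributes $\q_{n-1}(\Lambda) + \q_{n-2}(\Lambda)$.

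In Case B ($\pi_1 = n-1$), the triple $(1, j, k)$ with $\pi_j < n - 1$ and $\pi_k = n$ realizes a 213, so every $n$ must precede every value below $n-1$. If $\pi_2 = n - 1$, then (arguing as in Case A) $\pi_3\pi_4 = nn$ and the tail lies in $\QQ_{n-2}(\Lambda)$. Otherwise $\pi_2 = n$, and by the same forbidden-triple argument $\pi_3 = n$ as well. Write the remaining suffix as $\tau_L \cdot (n-1) \cdot \tau_R$ around the second copy of $n-1$. Quasi-Stirling forces $\tau_L$ and $\tau_R$ to be unions of complete pairs on disjoint subsets of $\{1, \dots, n-2\}$; the characterization applied to triples within $\tau_L$ together with the singleton $n-1$ rules out two distinct values in $\tau_L$; and applied to triples spanning $\tau_L$, the singleton, and $\tau_R$, it forces any value in $\tau_L$ to exceed every value in $\tau_R$. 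The only possibilities are $\tau_L = \varnothing$, giving $\q_{n-2}(\Lambda)$, and $\tau_L = (n-2)(n-2)$, giving $\q_{n-3}(\Lambda)$. Summing the two cases delivers the claimed recurrence.

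The characteristic polynomial factors as $x^3 - x^2 - 3x - 1 = (x+1)(x^2 - 2x - 1)$, with roots $-1$ and $1 \pm \sqrt{2}$, so the general solution is $\q_n(\Lambda) = A(1+\sqrt{2})^n + B(1-\sqrt{2})^n + C(-1)^n$; fitting the easily verified initial values $\q_0 = 1$, $\q_1 = 1$, $\q_2 = 4$ produces the stated closed form. The main obstacle is the Case B analysis, specifically the simultaneous use of the quasi-Stirling condition (to force complete-pair blocks on each side of the second $n-1$) and the pattern-avoidance condition (to bound the number of distinct values in $\tau_L$ and pin it to $\varnothing$ or $(n-2)(n-2)$); balancing these two constraints in the correct order is what makes the case split clean.
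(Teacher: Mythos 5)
Your proposal is correct and follows essentially the same route as the paper: both arguments isolate the same four structural families (your Case A with $q_2\in\{2,4\}$ and Case B sub-cases correspond exactly to the paper's types $nn\gamma$, $n(n-1)(n-1)n\gamma$, $(n-1)(n-1)nn\gamma$, and $(n-1)nn\gamma_1(n-1)\gamma_2$), arrive at the identical recurrence $\q_n(\Lambda)=\q_{n-1}(\Lambda)+3\q_{n-2}(\Lambda)+\q_{n-3}(\Lambda)$, and solve it with the same initial conditions. Your ``first entry exceeds last entry in every distinct-valued triple'' characterization is just a cleaner packaging of the paper's observation that $|\Set(\alpha)\cup\Set(\beta)|\leq 1$ with $\Set(\alpha)\cup\Set(\beta)\subseteq\{n-1\}$, and your write-up supplies more of the verification details than the paper does.
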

\begin{proof}
Let $\Lambda = \{123, 132, 213\}$ and $n>3$.  Write $\pi\in \QQ_n(\Lambda)$ as $\pi=\alpha n\beta n \gamma$. Since $\pi$ avoids 123 and 213, $|\Set(\alpha)\cup\Set(\beta)|\leq 1$. Since $\pi$ avoids 132, $\Set(\alpha)\cup\Set(\beta)=\varnothing$ or $\{n-1\}$. There are four cases:
\begin{enumerate}[(1)]
\item $\pi = nn\gamma$, 
\item $\pi = n(n-1)(n-1)n\gamma$, 
\item $\pi=(n-1)(n-1)nn\gamma$, or
\item $\pi=(n-1)nn\gamma_1(n-1)\gamma_2$. 
\end{enumerate}
Notice that in Case (4), since $\pi$ avoids 123 and 213, we have $|\Set(\gamma_1)|\leq 1$. Since $\pi$ also avoids 132, $\Set(\gamma_1)=\varnothing$ or $\{n-2\}$. 

Thus, there are $\q_{n-1}(\Lambda)$ permutations of Type (1), $\q_{n-2}(\Lambda)$ permutations of Type (2), $\q_{n-2}(\Lambda)$ permutations of Type (3), and $\q_{n-2}(\Lambda) + \q_{n-3}(\Lambda)$ permutations of Type (4). Thus, we obtain the recurrence $$\q_n(\Lambda) = \q_{n-1}(\Lambda) + 3\q_{n-2}(\Lambda) + \q_{n-3}(\Lambda).$$
By solving this recurrence  given the initial conditions that $\q_0(\Lambda) = 1, \q_1(\Lambda) = 1$, and $\q_2(\Lambda) = 4$, we obtain the result.
\end{proof}

    \begin{theorem}\label{123, 132, 312}
	For all $n \geq 1$, $\displaystyle \q_n(123, 132, 312) = \frac{1}{2}(n^2 + 3n - 2)$.
\end{theorem}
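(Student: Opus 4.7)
The plan is to fix $\Lambda = \{123,132,312\}$, decompose each $\pi \in \QQ_n(\Lambda)$ as $\pi = \alpha n \beta n \gamma$ (where the two displayed $n$'s are the only copies of $n$), derive tight structural constraints on $\alpha$, $\beta$, $\gamma$, and then count directly.

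First I would extract three families of constraints. From avoidance of $123$: any ascent between two positions inside $\alpha$ would combine with the first $n$ to yield a $123$, so $\alpha$ is weakly decreasing; the same argument with the second $n$ shows $\beta$ is weakly decreasing. From avoidance of $312$: any ascent between two distinct values inside $\gamma$ would combine with a preceding $n$ to yield a $312$, so $\gamma$ is weakly decreasing as well. From avoidance of $132$: the subsequence $x \cdot n \cdot y$ is a $132$ whenever $x < y < n$, so for any distinct values $a \in \Set(\alpha)$, $b \in \Set(\beta)$, $c \in \Set(\gamma)$ one obtains $a>b$, $a>c$, and $b>c$. The quasi-Stirling condition additionally forces $\Set(\alpha) \cap \Set(\beta) = \Set(\beta) \cap \Set(\gamma) = \varnothing$, since any shared value together with the intervening $n$'s would produce a $1212$ or $2121$.

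The crucial observation is a dichotomy: either $\Set(\alpha)\cap\Set(\gamma) = \varnothing$, or $\beta = \varnothing$. Indeed, if $s \in \Set(\alpha) \cap \Set(\gamma)$ and $b \in \Set(\beta)$, the subsequence $s\cdot n \cdot b$ (using the $\alpha$-copy of $s$) forces $s>b$, while $b \cdot n \cdot s$ (using the $\gamma$-copy of $s$) forces $b>s$, a contradiction. A closely parallel argument shows $|\Set(\alpha)\cap\Set(\gamma)| \leq 1$: two distinct shared values $s \neq s'$ would give a $132$ from the $\alpha$-copy of $\min(s,s')$, an intervening $n$, and the $\gamma$-copy of $\max(s,s')$.

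Counting the two resulting cases: if $\Set(\alpha) \cap \Set(\gamma) = \varnothing$, then $[n-1]$ is partitioned as $\Set(\alpha) \sqcup \Set(\beta) \sqcup \Set(\gamma)$ with all of $\Set(\alpha)$ above all of $\Set(\beta)$ above all of $\Set(\gamma)$, so each piece is a consecutive block of integers, and $\alpha,\beta,\gamma$ are uniquely reconstructed (each value doubled and listed in weakly-decreasing order); the count thus equals the number of weak compositions of $n-1$ into three parts, namely $\binom{n+1}{2}$. If $\Set(\alpha) \cap \Set(\gamma) = \{s\}$, then $\beta = \varnothing$ and the block-ordering constraints force $\Set(\alpha) = \{s,s+1,\ldots,n-1\}$ and $\Set(\gamma) = \{1,2,\ldots,s\}$, so $\pi$ is uniquely determined by $s \in [n-1]$, contributing $n-1$ additional permutations. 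Summing yields
\[
\q_n(\Lambda) = \binom{n+1}{2} + (n-1) = \tfrac{1}{2}(n^2+3n-2).
\]
The main obstacle, which is essentially careful bookkeeping, is the paired $132$-argument isolating the dichotomy; once this is in hand the enumeration is immediate, and one should also verify the converse that every permutation matching the listed structural constraints does in fact lie in $\QQ_n(\Lambda)$.
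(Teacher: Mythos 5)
Your proposal is correct and follows essentially the same route as the paper's proof: the same decomposition $\pi=\alpha n\beta n\gamma$, the same structural constraints forcing $\alpha,\beta,\gamma$ decreasing with $\Set(\alpha)$ above $\Set(\beta)$ above $\Set(\gamma)$, the same dichotomy ($\Set(\alpha)\cap\Set(\gamma)=\varnothing$ versus $\Set(\beta)=\varnothing$ with a single shared value), and the same count $\binom{n+1}{2}+(n-1)$. Your write-up is if anything slightly more explicit than the paper's about why the shared-value case excludes $\beta$ and why the intersection has size at most one.
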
	
\begin{proof}
Let $\Lambda=\{123, 132, 312\}$ and $n\geq2$. Write $\pi \in \QQ_n(\Lambda)$ as $\pi=\alpha n\beta n\gamma$. Then $\alpha$, $\beta$, and $\gamma$ are all decreasing since $\pi$ avoids $123$ and 312. In addition, since $\pi$ avoids $132$, we must have that for any $a\in\Set(\alpha)$, $b\in\Set(\beta)$, and $c\in\Set(\gamma)$, $a>b$, $b>c$, and $a\geq c$. 
We can only have $a=c$ if $\Set(\beta)=\varnothing$. This leaves two possibilities:
\begin{enumerate}[(1)]
\item $\Set(\alpha)\cap\Set(\gamma)=\varnothing$, or
\item $\Set(\alpha)\cap\Set(\gamma)\neq\varnothing$ and $\Set(\beta)=\varnothing$.
\end{enumerate}
In Case (1), $\{\Set(\alpha), \Set(\beta), \Set(\gamma)\}$ is a partition of $[n-1]$ into three (possibly empty) disjoint subsets of consecutive numbers. There are clearly ${{n+1}\choose 2}$ such partitions. 

In Case (2), we must have that $|\Set(\alpha)\cap\Set(\gamma)|=1$ since $\pi$ avoids both 123 and 312. Since both $\alpha$ and $\gamma$ are decreasing and $a\geq c$ for all $a\in\Set(\alpha)$ and $c\in\Set(\gamma)$, we thus have $\alpha'anna\gamma'$, and in fact $\alpha aa\gamma$ is exactly the decreasing permutation in $\S_{n-1,n-1}$. Since $a\in[n-1]$ and $\alpha'$ and $\gamma'$ are determined (namely $\alpha=(n-1)(n-1)\ldots(a+1)(a+1)$ and $\beta=(a-1)(a-1)\ldots 2211$), there are $n-1$ such permutations in Case (2). Taking Cases (1) and (2) together, we obtain the result.
\end{proof}

\subsection{Avoiding four patterns in $\S_3$}

\begin{theorem} 
\label{123, 132, 213, 321 and more}
For all $n \geq 5$, we have that $\q_n(123,132, 213,321)$, $\q_n(123,132, 231,321)$, and $\q_n(123,132, 312,321)$ are all $0$.
\end{theorem}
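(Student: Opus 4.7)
The plan is to observe that each of the three sets $\{123,132,213,321\}$, $\{123,132,231,321\}$, and $\{123,132,312,321\}$ contains both $123$ and $321$ as elements. Consequently, for each set $\Lambda$ in the theorem, any permutation $\pi \in \QQ_n(\Lambda)$ must in particular avoid both $123$ and $321$.

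Since $\QQ_n(\Lambda) \subseteq \S_{n,n}$, I will simply invoke Lemma \ref{no123and321}, which asserts that for $n \geq 5$ there is no permutation in $\S_{n,n}$ avoiding both $123$ and $321$. This immediately forces $\QQ_n(\Lambda) = \varnothing$ for each of the three $\Lambda$ in question, so the three counts are all $0$.

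There is essentially no obstacle here; the result is a direct corollary of Lemma \ref{no123and321}, analogous to the way Theorem \ref{123, 321} and Theorem \ref{123, 132, 321} were deduced earlier. The only thing worth spelling out is that the conclusion of Lemma \ref{no123and321} is monotone under enlarging the forbidden set: if a permutation avoids every pattern in $\{123,321\}$, then it certainly avoids every pattern in any superset, so emptiness of $\S_{n,n}(123,321)$ (for $n \geq 5$) propagates to emptiness of $\QQ_n(\Lambda)$ for any $\Lambda \supseteq \{123,321\}$.
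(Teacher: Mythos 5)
Your proposal is correct and follows exactly the paper's argument: each of the three pattern sets contains both $123$ and $321$, so the result is an immediate consequence of Lemma \ref{no123and321}. The extra remark about monotonicity under enlarging the forbidden set is a harmless elaboration of what the paper leaves implicit.
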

\begin{proof}
    This follows immediately from Lemma \ref{no123and321}.
    \end{proof}
    
\begin{theorem}\label{123, 132, 213, 231} 
For all $n \geq 2$, $\q_{n}(123, 132, 213, 231) = 4$.
\begin{proof}
Let $\Lambda=\{123,132,213,231\}$ and suppose $n>2$. Then for any $\pi\in\QQ_n(\Lambda)$, we must have that $\pi=nn\alpha$ with $\alpha\in\QQ_{n-1}$. Indeed, if there were some $i<j<k$ so that $\pi_i\neq n$ and either $\pi_j=n$ or $\pi_k=n$, where $\pi_i, \pi_j,$ and $\pi_k$ are distinct, then $\pi_i\pi_j\pi_k$ would be an occurrence of a forbidden pattern. Thus $\q_{n}(\Lambda)=\q_{n-1}(\Lambda)$. Since $\QQ_{2}(\Lambda) = \{1122,1221,2211,2112\}$, we must have that $\q_{n}(\Lambda)=4$ for all $n>2$.
\end{proof}
\end{theorem}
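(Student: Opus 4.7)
The plan is to show that for $n \geq 3$, every $\pi \in \QQ_n(\Lambda)$ with $\Lambda = \{123,132,213,231\}$ must begin with $nn$, which reduces the count to $\q_{n-1}(\Lambda)$; together with a direct verification of the base case $n=2$, this forces the sequence to be constant at $4$.

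The key observation I would start from is that the four forbidden patterns together with the two allowed patterns $312$ and $321$ exhaust $\S_3$, and both $312$ and $321$ share the structural feature that their maximum entry occurs at the leftmost position. Consequently, whenever $\pi_i, \pi_j, \pi_k$ are three pairwise distinct values at positions $i<j<k$, we must have $\pi_i = \max\{\pi_i,\pi_j,\pi_k\}$, for otherwise the triple realises one of $123,132,213,231$.

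Next I would use this structural fact to pin down the start of $\pi$. Assume $n \geq 3$ and suppose for contradiction that $\pi_1 \neq n$. Writing $\pi_1 = a$ with $a < n$, pick any value $b \in [n]\setminus\{a,n\}$ (available since $n\geq 3$); both $b$ and $n$ occur at positions greater than $1$, so some choice of positions yields a distinct triple $(a,?,?)$ whose leftmost entry is not the maximum, contradicting the observation above. A nearly identical argument handles the case $\pi_1 = n$, $\pi_2 \neq n$: the second occurrence of $n$ lies at some position $p_2 \geq 3$, and combining $\pi_2 = a$, this $n$, and a third value $b$ again produces a forbidden triple. Hence $\pi_1 = \pi_2 = n$.

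Writing $\pi = nn\alpha$ with $\alpha$ a permutation of $\{1,1,\dots,n-1,n-1\}$, I would then verify the equivalence $\pi \in \QQ_n(\Lambda) \iff \alpha \in \QQ_{n-1}(\Lambda)$: any putative occurrence of a forbidden pattern in $\pi$ using no copy of $n$ lies in $\alpha$, while an occurrence using exactly one copy of $n$ must place $n$ first (so the pattern is $312$ or $321$, which are permitted), and no occurrence can use both copies of $n$ since patterns have distinct entries. This yields $\q_n(\Lambda) = \q_{n-1}(\Lambda)$ for $n \geq 3$. Combined with the base case $\QQ_2(\Lambda) = \QQ_2 = \{1122,1221,2112,2211\}$ (the length-$3$ conditions are vacuous when $n=2$), the result $\q_n(\Lambda) = 4$ follows by induction. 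The only delicate point is the case analysis in the second paragraph; everything else is routine.
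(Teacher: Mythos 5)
Your proposal is correct and follows essentially the same route as the paper's proof: observe that the only permitted length-3 patterns are $312$ and $321$, deduce that every $\pi\in\QQ_n(\Lambda)$ with $n\geq 3$ must begin with $nn$, reduce to $\q_n(\Lambda)=\q_{n-1}(\Lambda)$, and anchor the induction at $\QQ_2(\Lambda)=\{1122,1221,2112,2211\}$. Your write-up is somewhat more explicit than the paper's (in pinning down $\pi_1=\pi_2=n$ and in checking that $\pi=nn\alpha$ avoids $\Lambda$ if and only if $\alpha$ does), but the underlying argument is the same.
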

\begin{theorem} \label{123, 132, 231, 312}
For all $n \geq 3$, $\q_{n}(123, 132, 231, 312) = 3$.
\begin{proof}
Let $\Lambda=\{123, 132, 231, 312\}$ and suppose $n>2$.  Let $\pi \in \QQ_{n}(\Lambda)$. Since $\pi$ avoids 132 and 231, $n$ must lie either at the beginning or end of the permutation. If $\pi_1=n$, then the values in $\{1,1,2,2,\ldots, n-1,n-1\}$ must appear in decreasing order since $\pi$ avoids 312. Similarly, if $\pi_{2n}=n$, then the values in $\{1,1,2,2,\ldots, n-1,n-1\}$ must appear in decreasing order since $\pi$ avoids 123. Thus the only elements of $\QQ_{n}(\Lambda)$ are $nn(n-1)(n-1)\cdots 2211$, $n(n-1)(n-1)\cdots 2211n$, and $(n-1)(n-1)\cdots 2211nn$.\end{proof}
\end{theorem}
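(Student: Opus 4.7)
The plan is to show that any $\pi \in \QQ_n(\{123, 132, 231, 312\})$ with $n \geq 3$ must place both copies of $n$ at endpoint positions, which (combined with the remaining pattern restrictions) forces the other entries into decreasing order and yields exactly three permutations.

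First I would prove that $\pi_1 = n$ or $\pi_{2n} = n$. Let $i < j$ denote the two positions of $n$, and suppose toward contradiction that $1 < i$ and $j < 2n$. Setting $a = \pi_1$ and $b = \pi_{2n}$ (both less than $n$), the triple $(a, n, b)$ at positions $(1, i, 2n)$ is a 132 if $a < b$ and a 231 if $a > b$. When $a = b$, both copies of $a$ sit at the endpoints, so since $n \geq 3$ there exists a third value $c \neq a, n$ at some internal position $k \notin \{i, j\}$; a short case split on whether $k < i$, $i < k < j$, or $k > j$ produces a 132 or 231 using either $(c, n, a)$ at positions $(k, i, 2n)$ or $(a, n, c)$ at $(1, i, k)$ or $(1, j, k)$, depending on the sign of $c - a$.

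Next, assume $\pi_1 = n$; the case $\pi_{2n} = n$ is handled symmetrically with 123 playing the role of 312. The 312 restriction applied with $\pi_1 = n$ as the leading element forces any two subsequent non-$n$ entries at positions $p < q$ to satisfy $\pi_p \geq \pi_q$, so the non-$n$ entries form the weakly decreasing sequence $(n-1)(n-1)(n-2)(n-2)\cdots 2211$. Letting $k$ denote the position of the second copy of $n$ (so $2 \leq k \leq 2n$), I split by parity. If $k$ is odd with $3 \leq k \leq 2n-1$, then positions $k-1$ and $k+1$ carry the two copies of a common non-$n$ value $v$, so $(\pi_1, \pi_{k-1}, \pi_k, \pi_{k+1}) = (n, v, n, v)$ is a forbidden 2121. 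If $k$ is even with $4 \leq k \leq 2n-2$, then the non-$n$ entries at positions $k-1$ and $k+1$ belong to distinct pairs, so $\pi_{k-1} > \pi_{k+1}$, and the triple $(\pi_{k-1}, \pi_k, \pi_{k+1})$ is a 231 with $n$ as middle. The only survivors are $k = 2$, yielding $nn(n-1)(n-1)\cdots 2211$, and $k = 2n$, yielding $n(n-1)(n-1)\cdots 2211\, n$.

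The symmetric case $\pi_{2n} = n$ with the first copy of $n$ at position $k'$ contributes the additional permutation $(n-1)(n-1)\cdots 2211\, nn$ (when $k' = 2n-1$), while the choice $k' = 1$ reproduces $n(n-1)(n-1)\cdots 2211\, n$, and intermediate $k'$ are excluded by the mirror argument. Thus the total count is exactly three. The main obstacle is the interior-placement step for the second copy of $n$: one must notice the parity dichotomy that distinguishes 2121 obstructions from 231 obstructions and identify the precise forbidden subsequence in each case. The $a = b$ subcase of the endpoint argument is a minor secondary obstacle, but it is also routine once the three sub-regions for $k$ are examined.
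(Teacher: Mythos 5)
Your proposal is correct and follows the same route as the paper's proof: the 132/231 avoidance forces a copy of $n$ to an endpoint, and then 312 (resp.\ 123) avoidance forces the remaining entries into decreasing order, leaving exactly three permutations. You supply more detail than the paper does — notably the $\pi_1=\pi_{2n}$ subcase of the endpoint argument and the parity analysis ruling out interior placements of the second copy of $n$ — but these are elaborations of the same argument rather than a different approach.
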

\begin{theorem}  \label{132, 213, 231, 312}
For all $n \geq 3$, $\q_{n}(132, 213, 231, 312) = 2$.
\begin{proof}
For all $n\geq 3$, the only patterns of length 3 that are allowed in a permutation $\pi \in \QQ_{n}(132, 213, 231, 312)$ are 123 and 321. Thus only the increasing permutation $1122\ldots nn$ and the decreasing permutation $nn\ldots 2211$ are in $\QQ_{n}(132, 213, 231, 312)$.
\end{proof}
\end{theorem}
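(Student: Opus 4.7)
My plan rests on the key observation that $\Lambda = \{132, 213, 231, 312\}$ is exactly the set of non-monotone permutations in $\S_3$. Consequently, for any $\pi \in \qbar_n(\Lambda)$ and any three distinct values $a < b < c$ in $[n]$, any triple formed by choosing one occurrence of each must appear in positional order as either $abc$ or $cba$.

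I would first apply this observation to the first-occurrence map $f(v)$ and the last-occurrence map $L(v)$ for $v \in [n]$. Each sequence, read in order $v = 1, 2, \ldots, n$, defines a permutation of positions; since every triple of distinct values is positionally monotone, the induced permutation in $\S_n$ must itself be either increasing or decreasing. This produces four monotonicity combinations for the pair $(f, L)$, of which I would rule out the two opposite-direction cases. For instance, if $f$ is increasing in $v$ and $L$ is decreasing, then avoidance of $1212$ forces $[f(b), L(b)]$ to be strictly nested inside $[f(a), L(a)]$ for every $a < b$; using $n \geq 3$ and the values $1, 2, 3$, the global nesting yields $f(1) < f(3) < L(2)$, and the triple of values $1, 3, 2$ at these positions realizes the forbidden pattern $132$. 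The other opposite-direction case reduces to this one by the complement operation, since $\Lambda$ is closed under complementation.

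For the same-direction cases, I would invoke the quasi-Stirling property directly. When both $f$ and $L$ are increasing in $v$, each pair $a < b$ has $f(a) < f(b)$ and $L(a) < L(b)$; avoidance of $1212$ forces either the disjoint configuration $L(a) < f(b)$ or the nested configuration $f(a) < f(b) < L(b) < L(a)$. The nested configuration contradicts $L(a) < L(b)$, so every pair of values has its occurrences fully disjoint with $a$ before $b$, giving $\pi = 1122\cdots nn$. The symmetric both-decreasing case yields $\pi = nn(n-1)(n-1)\cdots 11$. Combining the two possibilities gives $\q_n(\Lambda) = 2$ for $n \geq 3$.

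The main obstacle lies in the opposite-direction step, where I must carefully verify that the global nesting really does place $f(1)$, $f(3)$, and $L(2)$ in the correct relative positional order; once this is set, the forbidden triple is immediate, and the remaining arguments reduce to straightforward consequences of the quasi-Stirling property.
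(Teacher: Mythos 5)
Your proof is correct and rests on the same core observation as the paper's: since $\{132,213,231,312\}$ is exactly the set of non-monotone patterns in $\S_3$, every triple of distinct values must occur in monotone positional order. The paper stops there and simply asserts that only $1122\cdots nn$ and $nn\cdots 2211$ survive; your write-up supplies the deduction that this assertion actually needs, since for multiset permutations the monotone-triple condition alone does not immediately force global monotonicity (it constrains nothing about triples with a repeated value, e.g. $1221$ avoids all four patterns). Your route through the first- and last-occurrence maps $f$ and $L$, together with the nested-or-disjoint structure of occurrence intervals forced by $1212$/$2121$-avoidance, is exactly the missing bridge: the two opposite-direction cases produce a forbidden $132$ (or reduce to that case by complementation, under which $\Lambda$ is closed), and the two same-direction cases force the occurrence intervals to be pairwise disjoint in value order, yielding precisely the increasing and decreasing permutations. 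The one small point worth spelling out in a final write-up is the elementary fact that a sequence of distinct positions all of whose $3$-term subsequences are monotone is itself monotone (overlapping triples must agree in direction), which is what upgrades ``every triple of $f$-values is monotone'' to ``$f$ is monotone.''
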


\subsection{Avoiding five patterns in $\S_3$}

\begin{theorem} 
\label{123, 132, 213, 231, 321}
For all $n \geq 5$, $\q_n(123,132, 213,231,321)=0$.
\end{theorem}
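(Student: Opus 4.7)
The plan is essentially immediate and mirrors the strategy used in Theorems \ref{123, 321}, \ref{123, 132, 321}, and \ref{123, 132, 213, 321 and more}. The key observation is that the pattern set $\{123,132,213,231,321\}$ contains both $123$ and $321$, so any permutation in $\QQ_n(123,132,213,231,321)$ in particular avoids both of these patterns simultaneously.

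I would invoke Lemma \ref{no123and321} directly: for $n \geq 5$, there is no permutation in $\S_{n,n}$ (and in particular, no quasi-Stirling permutation) that avoids both $123$ and $321$, since any five distinct values chosen from such a permutation must occur in a relative order containing one of the two monotone patterns of length three. Since $\QQ_n(123,132,213,231,321) \subseteq \S_{n,n}(123,321)$, the former set must be empty for $n \geq 5$.

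The main (and only) obstacle is nonexistent here; this is a one-line corollary of the already established lemma. The proof would read: ``This follows immediately from Lemma \ref{no123and321}, since every permutation in $\QQ_n(123,132,213,231,321)$ avoids both $123$ and $321$.''
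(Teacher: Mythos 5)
Your proof is correct and takes exactly the same approach as the paper, which also deduces the result as an immediate consequence of Lemma \ref{no123and321} since the forbidden set contains both $123$ and $321$. Nothing further is needed.
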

\begin{proof}
    This follows immediately from Lemma \ref{no123and321}.
    \end{proof}
    
    \begin{theorem} 
\label{123, 132, 213, 231, 312}
For all $n \geq 3$, $\q_n(123,132, 213,231,312)=1$.
\end{theorem}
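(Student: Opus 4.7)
The plan is to observe that if a permutation avoids every pattern in $\S_3$ except $321$, then any three distinct values appearing in it must appear in strictly decreasing relative order. So I would start from this observation: if $\pi \in \qbar_n(123, 132, 213, 231, 312)$ and $i<j<k$ are positions with $\pi_i, \pi_j, \pi_k$ pairwise distinct, then the only length-$3$ pattern left is $321$, so $\pi_i > \pi_j > \pi_k$.

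From here, the key structural claim I would prove is that for $n \geq 3$, any two distinct values $a < b$ must satisfy: every occurrence of $b$ lies to the left of every occurrence of $a$. Suppose instead some occurrence of $a$ precedes some occurrence of $b$, giving positions $i_0 < j_0$ with $\pi_{i_0} = a$ and $\pi_{j_0} = b$. Since $n\geq 3$, pick any third distinct value $c$ with an occurrence at some position $k$, and do the case analysis on whether $k < i_0$, $i_0 < k < j_0$, or $k > j_0$. In each case the triple at positions (in sorted order) must form $321$, which forces $a > b$ by the observation, contradicting $a < b$.

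With this claim in hand, the two copies of $n$ must precede the two copies of $n-1$, which must precede the two copies of $n-2$, and so on. Moreover, between the two occurrences of any value $v$, no larger value can appear (they are all earlier) and no smaller value can appear (they are all later), so the two copies of $v$ are adjacent. This pins down the permutation uniquely as $nn(n-1)(n-1)\cdots 2211$. To finish, I would verify that this permutation is in $\qbar_n$ and indeed avoids all five patterns (it contains only the pattern $321$ among length-$3$ subsequences of distinct values, and has no $1212$ or $2121$ subsequences since all duplicates are adjacent). Hence $\q_n(123,132,213,231,312) = 1$ for all $n\geq 3$.

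The only mildly subtle part is the case analysis in the structural claim, but this is routine: the three configurations of $k$ relative to $i_0, j_0$ each directly contradict the $321$ requirement because the pair $(a,b)$ already appears in increasing order. No case requires more than comparing signs.
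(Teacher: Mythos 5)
Your proof is correct and follows essentially the same approach as the paper, which simply notes that since only the pattern $321$ may appear among distinct triples, $\pi$ must be the decreasing permutation $nn(n-1)(n-1)\cdots 2211$. Your version is more careful, explicitly handling repeated values to show each pair of duplicates must be adjacent, a detail the paper's one-line argument glosses over.
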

\begin{proof}
Let $n\geq 3$, and suppose $\pi \in\QQ_n(123,132, 213,231,312)$. Since the only pattern of length 3 that is allowed to appear is the decreasing pattern, every length 3 subsequence of $\pi$ is decreasing, thus $\pi$ is the decreasing permutation.
    \end{proof}

\subsection{Number of plateaus}

We finish this section by enumerating the set of quasi-Stirling permutations with a given number of plateaus, i.e.  \emph{consecutive} occurrences of the pattern 11. 
We note that the bijection $\varphi$, as defined in the previous section, sends leaves of an ordered rooted labeled tree to plateaus in the associated quasi-Stirling 
permutation. In other words, if the tree $T$ has a leaf labeled $i$, then there is some $j\in [2n-1]$ so that for $\pi=\varphi(T)$, $\pi_j=\pi_{j+1}=i$. Using a well-known result 
about the number of ordered rooted unlabeled trees on $[n]$ with $k$ leaves (see for example \cite{StanleyBook}), i.e. that there are $$\frac{1}{k} \binom{n-1}{k-1}\binom{n}{k-1}$$ of them, 
one can immediately obtain the following proposition. 

\begin{proposition}\label{cor:plateaus}
The number of quasi-Stirling permutations on $[n,n]$ with exactly $k$ plateaus is equal to $$ \frac{n!}{k} \binom{n-1}{k-1}\binom{n}{k-1}.$$
\end{proposition}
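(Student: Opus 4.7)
The plan is to apply the bijection $\varphi: \T_n \to \QQ_n$ established in Theorem \ref{thm:treebij}, together with the classical enumeration of plane trees by number of leaves that is cited in the paragraph preceding the proposition. The key observation, already noted there, is that $\varphi$ sends leaves of an ordered rooted labeled tree to plateaus of the corresponding quasi-Stirling permutation, so the problem reduces to counting trees in $\T_n$ with a prescribed number of leaves.

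First I would verify the leaf--plateau correspondence with a short argument. In the depth-first walk defining $\varphi$, an edge joining a parent to a leaf $i$ is traversed down and immediately back up, so the label $i$ is recorded in two consecutive positions and contributes a plateau; conversely, for any internal (non-leaf) node, the traversal of its nonempty subtree separates the two occurrences of its label, so it cannot produce a plateau. Equivalently, this is directly visible from the inverse construction in the proof of Theorem \ref{thm:treebij}, where blocks of the form $aa$ are identified with leaves labeled $a$. Consequently $\varphi$ restricts to a bijection between trees in $\T_n$ with exactly $k$ leaves and permutations $\pi \in \QQ_n$ with $\pl(\pi) = k$.

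It then remains to count the elements of $\T_n$ with exactly $k$ leaves. Each such tree is specified by (i) its underlying ordered rooted \emph{unlabeled} plane tree shape and (ii) a bijective assignment of the labels in $[n]$ to the $n$ non-root vertices. The leaf count depends only on the shape, and every shape admits exactly $n!$ labelings. Invoking the cited result that there are $\frac{1}{k}\binom{n-1}{k-1}\binom{n}{k-1}$ ordered rooted plane trees with $n$ edges and $k$ leaves, multiplication by $n!$ yields the claimed formula $\frac{n!}{k}\binom{n-1}{k-1}\binom{n}{k-1}$.

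I do not expect a genuine obstacle here; both ingredients are already in hand. The only point that deserves a moment of care is a bookkeeping convention: a tree in $\T_n$ has $n+1$ vertices in total (an unlabeled root plus the $n$ labeled vertices), so it has $n$ edges, which is the parameter to which the cited Narayana-type formula directly applies.
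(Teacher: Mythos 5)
Your proof is correct and follows essentially the same route as the paper: the preceding paragraph in the text already observes that $\varphi$ sends leaves to plateaus and invokes the Narayana-type count $\frac{1}{k}\binom{n-1}{k-1}\binom{n}{k-1}$ of ordered rooted unlabeled trees with $n$ edges and $k$ leaves, then multiplies by $n!$ for the labelings. Your write-up simply makes the leaf--plateau correspondence and the shape-times-labeling decomposition explicit, which the paper leaves as immediate.
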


\section{Open questions}\label{sec: open}

\paragraph{Avoid other patterns.} In this paper, we do not enumerate the number of quasi-Stirling permutations that avoid a single pattern of length 3. Though we made some progress, we were unable to produce a recurrence or closed form for these permutations. Thus it remains open to enumerate $\QQ_n(321)$ and $\QQ_n(312)$.

\paragraph{Statistics.}
In this paper, we enumerate quasi-Stilring permutations by the number of plateaus. It is an open question to enumerate these permutations by the number of descents, by the number of left-to-right maxima, or any other statistic. Based on numerical evidence, we conjecture that the number of quasi-Stirling permutations on $[n,n]$ with exactly $n-1$ descents is $(n+1)^{n-1}$. 
Since this number coincides with the number of rooted \emph{unordered} forests on $n$ nodes, this may indicate one can use the bijection $\varphi$ to rooted ordered trees in some way to prove this conjecture.

\paragraph{Generalizations.}
In several contexts, Stirling permutations have been generalized to permutations of any multiset that avoid 212. A similar generalization could be done for quasi-Stirling permutations. It may be interesting to investigate such permutations.

\paragraph{Pattern avoidance in forests.} Pattern-avoidance is closely related to the study of pattern avoidance in forests. For example, the number of inversions (i.e, an occurrence of 21) in a rooted labeled ordered tree is equal to the number of occurrences of the pattern 2112 in quasi-Stirling permutations. The authors know of no study of pattern avoidance of ordered trees, but much has been studied for inversions of unordered trees (see for example, \cite{MR68, GSY95}) and other statistics and pattern avoidance in unordered trees (see for example \cite{GS06, AA18}).

\subsection*{Acknowledgements} The student authors on this paper, Adam Gregory, Bryan Pennington, and Stephanie Slayden, were funded as part of an REU at the University of Texas at Tyler sponsored by the NSF Grant DMS-1659221.

\bibliographystyle{plain}
\bibliography{stirlingrefs}

\end{document}